\numberwithin{equation}{section}
\newtheorem{thm}{Theorem}[section]
\newtheorem{lem}[thm]{Lemma}
\newtheorem{prop}[thm]{Proposition}
\newtheorem{conj}[thm]{Conjecture}
\theoremstyle{definition}
\newtheorem{defn}[thm]{Definition}
\theoremstyle{remark}
\newtheorem{rmk}[thm]{Remark}
\newtheorem{exam}[thm]{Example}
\numberwithin{equation}{section}
\newcommand{\ra}{{\, \rightarrow \,}}
\newcommand{\z}{{\mathbb Z}}
\newcommand{\q}{{\mathbb Q}}
\newcommand{\n}{{\mathbb N}}
\newcommand{\oo}{{\mathcal{O}}}
\newcommand{\ok}[1]{{\mathcal{O}_K^{#1}}}
\newcommand{\of}[1]{{\mathcal{O}_F^{#1}}}
\newcommand{\Mod}[1]{\ (\mathrm{mod}\ #1)}
\newcommand{\tr}{\operatorname{Tr}}
\renewcommand{\r}{\mathbb{R}}
\newcommand{\Z}{\mathbb{Z}}
\newcommand{\Q}{\mathbb{Q}}
\newcommand{\co}{\mathcal O}
\author{Vítězslav Kala}
\address{Charles University, Faculty of Mathematics and Physics, Department of
	Algebra, Sokolov\-sk\' a 83, 18600 Praha~8, Czech Republic}
\email{vitezslav.kala@matfyz.cuni.cz}
\author{Daejun Kim}
\address{Department of Mathematics Education, Korea University,
	Seoul 02841, Republic of Korea}
\email{daejunkim@korea.ac.kr}
\author{Seok Hyeong Lee}
\address{Center for Quantum Structures in Modules and Spaces, Seoul National University, Seoul 08826, Republic of Korea}
\email{lshyeong@snu.ac.kr}
\subjclass[2020]{11E12, 11E20, 11R04, 11R80, 11H06}
\keywords{Universal quadratic form, quadratic lattice, totally real number field, extension of scalars, geometry of numbers}
\thanks{The first author was supported by Czech Science Foundation (grant 21-00420M). The second author was supported by a KIAS Individual Grant (MG085501) at Korea Institute for Advanced Study. The third author was supported by the National Research Foundation of Korea (NRF) grant funded by the Korea government (MSIT) (No.2020R1A5A1016126).}
\title{Universality lifting from a general base field}
\begin{document}
	
\begin{abstract} 
Given a totally real number field $F$, we show that there are only finitely many totally real extensions of $K$ of a fixed degree that admit a universal quadratic form defined over $F$. We further obtain several explicit classification results in the case of relative quadratic extensions.
\end{abstract}

\maketitle

\section{Introduction}\label{sec:1}

Thanks to Lagrange, Ramanujan, and Bhargava, among many others, we obtained a very good understanding of \textit{universal} quadratic forms, i.e., those that represent all positive rational integers.
Results such as Bhargava--Hanke 290--Theorem \cite{BH} may even suggest that, as soon as a quadratic forms has sufficiently many variables (say, four or five), then it is actually quite easy for it to be universal.

The situation is still quite similar in rings of integers in number fields $K$ that are \textit{not} totally real. Siegel \cite{Sie1} used the circle method to show that the sum of five squares is universal over non-totally real $K$ if the discriminant of $K$ is odd (in fact, only four squares suffice, as spinor genera establish \cite{EH}); otherwise the only obstruction is dyadic. Until today, employing local methods in this indefinite situation continues to be very interesting and fruitful, e.g., \cite{HHX, XZ}.

Surprisingly, things are markedly different when one considers the \textit{totally positive} setting that we consider in this paper, i.e., totally positive definite quadratic forms representing totally positive integers $\mathcal O_K^+$ in a totally real field $K$.
In this case, already Siegel \cite{Sie1} learned that the sum of any number of squares is almost never universal, except for four squares over $\Q$ and three squares over $\Q(\sqrt 5)$ \cite{M}.

While Hsia--Kitaoka--Kneser \cite{HKK} proved the \textit{asymptotic local-global principle} that quadratic forms in at least five variables integrally represent all elements of sufficiently large norm that are represent everywhere locally, the small elements cause serious trouble. Already the aforementioned result of Siegel \cite{Sie1}  on the non-universality of sums of squares employed \textit{indecomposables}, i.e., totally positive integers $\alpha\in\co_K^+$ that are not decomposable as the sum $\alpha=\beta+\gamma$ for $\beta,\gamma\in\co_K^+$.
Recently, numerous authors 
used indecomposables and closely related tools and ideas to show that, very often, universal quadratic forms require arbitrarily many variables, first over real quadratic fields \cite{BK1,CKR, Ka1, KYZ,Ki1, Ki2, KKP},
but also in higher degrees \cite{CLSTZ,Ka3, KM,KT, KTZ, Man, Ya}. For other relevant works, often employing local-global methods, see \cite{CI,CO,Ea, EK} or the surveys \cite{Ka2,Km}.

Not only do indecomposables force universal forms to have large ranks, they also greatly restrict their possible coefficients. Specifically, the lifting problem asks the following question.

\medskip

\textbf{Lifting problem.} \textit{When does it happen that a totally positive definite quadratic form $Q$ defined over a totally real number field $F$ is universal over a totally real field $K\supset F$?}

\medskip

The first partial answer again goes back to Siegel \cite{Sie1}, as the sum of squares is defined over $\Q$ and is never universal over $K\neq\Q,\Q(\sqrt 5)$. Every diagonal form with $\Z$-coefficient is represented by the sum of squares, and so it cannot be universal either. However, over $\Q(\sqrt 5)$ there are quite a few universal forms: Already in 1928, G\" otzky \cite{Go} showed the universality of the sum of four squares. Maa{\ss} \cite{M} reduced the number of squares to three, and much later Chan--Kim--Raghavan \cite{CKR} proved that $x^2+y^2+2z^2$ is the only other universal ternary form  that is classical, i.e., whose off-diagonal coefficients are even. Deutsch then used quaternion rings to obtain the universality of $q_1=x^2+xy+y^2+z^2+zw+w^2$ \cite{D1} and $q_2=x^2+y^2+z^2+w^2+xy+xz+xw$ \cite{D2}. Despite all these results, classifying all universal forms with $\Z$-coefficients over $\Q(\sqrt 5)$ seems to be a non-trivial open problem. 
Over other fields, examples have been hard to find, and Deutsch \cite{D1} conjectured that $q_1$ is not universal over any other real quadratic field.

This conjecture was established by Kala--Yatsyna \cite{KY1} who showed that, in fact, there are no universal forms with $\Z$-coefficients over real quadratic fields different from $\Q(\sqrt 5)$. They also proved that, if $K$ has principal codifferent ideal and its degree is $3,4,5$, or $7$, then it admits no universal form with $\Z$-coefficients, unless $K=\Q(\zeta_7+\zeta_7^{-1})$, over which $q_2$ is universal (Kala--Melistas \cite{KMe} observed that the argument actually extends to some fields in degrees $\leq 43$). Gil-Mu\~ noz--Tinkov\' a \cite{GMT} considered more cubic fields, but Kim--Lee \cite{KL} then significantly strengthened these results to cover all cubic and biquadratic fields, before Kala--Yatsyna \cite{KY3} showed that in degrees $\leq 5$ there are no further fields having a universal form with $\Z$-coefficients. While the indefinite case is not the focus of our paper, there have been also very interesting recent results, e.g., by He--Hu--Xu \cite{HHX} and Xu--Zhang \cite{XZ}.

The only completely general result that is known in the totally real case concerns the \textit{weak lifting problem}: Kala--Yatsyna \cite{KY2} proved that when one fixes the base field $F$, the form $Q$, and the degree of the extension $d=[K:F]$, then there are at most finitely many such fields $K$ over which $Q$ is universal.

\medskip

Our first main theorem  significantly strengthens this result by removing the assumption that the form is fixed. As is common in the literature, throughout the paper we work in the more general language of quadratic lattices (see Section \ref{sec:2} for this and all other precise definitions), and so we already use it in the formulation of the theorem.

\begin{thm}\label{thm:intro1}
Let $F$ be a totally real number field and $d\in\n$. There are at most finitely many totally real fields $K\supseteq F$ with $[K:F]=d$ such that there is an $\of{}$-lattice $L$ such that $L\otimes\ok{}$ is universal.
\end{thm}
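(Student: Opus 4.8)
The plan is to reduce the assertion to a bound on discriminants and then invoke Hermite--Minkowski. Since $[K:\mathbb{Q}]=d[F:\mathbb{Q}]$ is fixed once $F$ and $d$ are, and there are only finitely many number fields of a given degree and bounded discriminant, it suffices to produce a constant $C=C(F,d)$ such that whenever some $\of{}$-lattice $L$ makes $L\otimes\ok{}$ universal one has $|\Delta_K|\le C$. A crucial point is that the rank of $L$ cannot be bounded in terms of $F$ and $d$ (one may always adjoin extra variables), so the argument must not depend on $L$ at all.

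The first step is to reformulate universality purely in terms of the extension $\ok{}/\of{}$. Fix an $\of{}$-module generating set $\omega_1=1,\omega_2,\dots,\omega_D$ of $\ok{}$, with $D$ bounded in terms of $d$. Every $v\in L\otimes\ok{}$ can be written $v=\sum_i x_i\otimes\omega_i$ with $x_i\in L$, and then $Q(v)=\sum_{i,j}\omega_i\omega_j\,g_{ij}=\bm{\omega}^{\mathsf{T}}G\bm{\omega}$, where $G=(g_{ij})$ is the Gram matrix of $x_1,\dots,x_D$ in $L$ --- a symmetric matrix over $\of{}$ which is totally positive semidefinite (a genuine Gram matrix in every real place of $F$). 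Hence $L\otimes\ok{}$ can represent $\alpha\in\ok{+}$ only if $\alpha=\bm{\omega}^{\mathsf{T}}G\bm{\omega}$ for some totally positive semidefinite symmetric $G$ over $\of{}$. It therefore suffices to prove the purely arithmetic statement: only finitely many totally real $K\supseteq F$ with $[K:F]=d$ have the property that the set $T_K:=\{\bm{\omega}^{\mathsf{T}}G\bm{\omega}: G\text{ totally positive semidefinite over }\of{}\}$ --- which is intrinsic to $\ok{}/\of{}$ --- equals all of $\ok{+}$.

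The heart of the proof is a geometry-of-numbers argument showing $T_K\neq\ok{+}$ once $|\Delta_{K/F}|$ is large. Write $T_K=\Psi(\mathcal{C}\cap\Lambda)$, where $\Lambda$ is the lattice of symmetric $D\times D$ matrices over $\of{}$, $\mathcal{C}\subset\Lambda\otimes\mathbb{R}$ is the totally positive semidefinite cone, and $\Psi(G)=\bm{\omega}^{\mathsf{T}}G\bm{\omega}$ is a fixed surjective $\of{}$-linear map onto $\ok{}$ with kernel lattice $\Lambda_0$. For $\alpha\in\ok{+}$ the fibre $\Psi^{-1}(\alpha)\cap\Lambda$ is a coset of $\Lambda_0$, and it meets $\mathcal{C}$ precisely when that coset meets the compact convex body $\mathcal{C}_\alpha:=\mathcal{C}\cap\Psi^{-1}(\alpha)_{\mathbb{R}}$ (compactness because, in each real place $\sigma$ of $F$, $\sigma(G)\succeq0$ together with the equations $\tau(\bm{\omega})^{\mathsf{T}}\sigma(G)\,\tau(\bm{\omega})=\tau(\alpha)$ bounds all entries of $\sigma(G)$ by Cauchy--Schwarz). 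One checks that $\operatorname{vol}(\mathcal{C}_\alpha)$ and $\operatorname{covol}(\Lambda_0)$ are governed by $F$, $d$, and $\Delta_{K/F}$, and I would exploit this by choosing $\alpha\in\ok{+}$ sufficiently extreme --- e.g.\ with one archimedean coordinate $\tau_1(\alpha)$ very small, which is possible (take suitable even powers of a unit) --- so that $\mathcal{C}_\alpha$ collapses in the directions pinned by $\tau_1$; a covering-radius estimate should then force the coset $\Psi^{-1}(\alpha)\cap\Lambda$ to miss $\mathcal{C}_\alpha$ as soon as $|\Delta_{K/F}|$ exceeds a bound depending only on $F$ and $d$, so that $\alpha\notin T_K$. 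For many fields the failure of $T_K=\ok{+}$ is already visible $2$-adically, since $\bm{\omega}^{\mathsf{T}}G\bm{\omega}\equiv\sum_i g_{ii}\omega_i^2\pmod{2\ok{}}$ forces $T_K$ into the $\of{}$-span of $\{\omega_i^2\}$ modulo $2\ok{}$, which is proper for most $K$; the geometric argument is needed precisely to dispose of the remaining fields (such as $\mathbb{Q}(\sqrt5)$ when $F=\mathbb{Q}$, $d=2$).

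The main obstacle is this geometry-of-numbers step: producing, uniformly in $F$ and $d$ and without reference to $L$, an explicit $\alpha\in\ok{+}$ that provably lies outside $T_K$ once $|\Delta_{K/F}|$ is large. The delicate points are (i) controlling the shape of the slices $\mathcal{C}_\alpha$ of the positive semidefinite cone and of the kernel lattice $\Lambda_0$, and choosing $\alpha$ so that $\mathcal{C}_\alpha$ is both flat and positioned so that some coset of $\Lambda_0$ avoids it --- a covering computation that is harder than the classical simplex-counting over $\mathbb{Q}$ because $\mathcal{C}_\alpha$ is a non-round convex body inside an affine subspace; (ii) the degenerate cases, where $G$ has rank $<D$ or $\ok{}$ is non-free over $\of{}$ (the Steinitz data is bounded in terms of $d$, costing only a bounded index); and (iii) assembling the archimedean estimate with the $2$-adic congruence information into a single bound $|\Delta_K|\le C(F,d)$. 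I expect step (i) --- rigorously turning ``$\mathcal{C}_\alpha$ small'' into ``$T_K\neq\ok{+}$'' --- to be where the real work lies.
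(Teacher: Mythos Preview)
Your reformulation in the first two paragraphs is correct and matches the paper's Lemma~\ref{lem:rep-by-rk<=d+1} and Theorem~\ref{thm:equiv-existence-of-univ}: the existence of an $\of{}$-lattice $L$ with $L\otimes\ok{}$ universal is equivalent to $T_K=\ok{+}$, and $T_K$ depends only on the extension $K/F$.

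The gap is in the third paragraph, where the central geometry-of-numbers step is only sketched, and the specific line of attack you propose has a conceptual flaw. The set $T_K$ is stable under multiplication by squares of units of $K$: if $\alpha=Q(v)$ for some $v\in L\otimes\ok{}$ then $\epsilon^2\alpha=Q(\epsilon v)$ for any $\epsilon\in\ok{\times}$. In your matrix language, multiplication by $\epsilon$ on $\ok{}$ is realized by a matrix $C$ over $\of{}$, and $G\mapsto C^{\mathsf T}GC$ carries $\mathcal{C}_\alpha$ isomorphically onto $\mathcal{C}_{\epsilon^2\alpha}$ while preserving both $\Lambda$ and $\Lambda_0$. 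Hence making $\tau_1(\alpha)$ small ``by taking suitable even powers of a unit'' changes nothing: the coset-meets-body question for $\epsilon^2\alpha$ is literally the same as for $\alpha$. Any successful argument must use an invariant of the $\ok{\times 2}$-orbit, and you have not identified one. Carrying out the covering-radius estimate you envision would in effect amount to reproving the main result of \cite{KY2} in a strengthened, form-free version; this is genuinely hard and is not done here.

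The paper sidesteps this difficulty by a two-step reduction you are missing. First, by a theorem of Chan--Oh \cite[Theorem~5.7]{CO}, for each $k$ there is a single $\of{}$-lattice $L_k$ that represents \emph{every} $\of{}$-lattice of rank $k$; taking $\mathcal{L}=L_1\perp\cdots\perp L_{d+1}$ gives one $\of{}$-lattice, depending only on $F$ and $d$, that represents every $\of{}$-lattice of rank $\le d+1$. Combined with the rank bound you (correctly) established, this shows that if \emph{any} $\of{}$-lattice is universal over $K$, then this \emph{fixed} $\mathcal{L}$ already is. Second, one applies the weak lifting theorem of Kala--Yatsyna \cite[Theorem~2]{KY2}: for a fixed $\of{}$-lattice and fixed $d$, only finitely many $K$ admit universality. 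The key idea you did not find is the Chan--Oh step, which converts ``some lattice works'' into ``this particular lattice works'' and thereby reduces the problem to the already-solved fixed-form case, rather than attacking the geometry of $T_K$ directly.
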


This will be established as Theorem \ref{cor:finiteness}. The key is to note that it is sufficient to consider the representability of individual elements (in fact, Theorem \ref{thm:equiv-existence-of-univ} says that indecomposables suffice) by lattices of bounded rank (Lemma \ref{lem:rep-by-rk<=d+1}) and that one can combine arbitrarily many representing lattices into one thanks to the existence of universality criterion sets that was recently established by Chan--Oh \cite{CO}.

\medskip

In order to obtain a more concrete understanding of the general lifting problem,  in the rest of the article we focus on the -- easiest, but already highly challenging -- case $d=2$ of quadratic extensions $K/F$.

Not surprisingly, it turns out that things are significantly easier when the base field $F$
has class number 1 for, e.g., in this case there is a relative integral basis $\co_K=\co_F[w]$. Using it, Proposition \ref{prop:quad_general_bound} gives a general finiteness criterion in terms of the relative discriminant of the extension $K/F$.

We further give a computational classification of all real quadratic fields $F$ with class number 1 and discriminant $D_F \le 200, D_F\neq 193,$ for which there is an $\of{}$-lattice with a universal lift to a quadratic extension $K/F$. The computational part consisted of a series of programs in Mathematica that were quite fast to run for small $D_F$, took approx. 7 hours for $D_F=177$, and did not finish within 1 day for the excluded case $D_F=193$, see Remark \ref{rmk:code}.

\begin{thm}\label{thm:intro2}
Table $\ref{table1}$ provides the complete list of real quadratic fields $F=\q(\sqrt{D_F})$ and totally real quadratic extensions $K/F$ such that there is an $\of{}$-lattice that is universal over $\ok{}$, among all fundamental discriminants $D_F \le 200, D_F\neq 193,$ such that $F$ has class number $1$. For all of them, $D_F\leq 56$.
\end{thm}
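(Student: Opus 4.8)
The plan is to reduce the classification, for each base field $F$, to a terminating and effective computation, and then to run it. Fix a real quadratic field $F=\q(\sqrt{D_F})$ with class number $1$ and $D_F\le 200$, $D_F\neq 193$. Since $h_F=1$ there is a relative integral basis $\ok{}=\of{}[w]$, so Proposition \ref{prop:quad_general_bound} applies and yields an explicit constant, depending only on $F$, that bounds the norm of the relative discriminant $\mathfrak d_{K/F}$ of any totally real quadratic extension $K/F$ admitting an $\of{}$-lattice with a universal lift to $\ok{}$. Enumerating the finitely many $K=F(\sqrt\delta)$, $\delta\in\of{+}$, with $\mathfrak d_{K/F}$ below this bound leaves, for each $F$, only finitely many candidate pairs $(F,K)$ to examine.

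For a fixed candidate $K$, I would then reduce the existence of a universal lift to a finite check. By the Chan--Oh theorem on universality criterion sets \cite{CO}, choose a finite set $S_K\subseteq\ok{+}$ such that an $\ok{}$-lattice is universal if and only if it represents every element of $S_K$; by Theorem \ref{thm:equiv-existence-of-univ} one may moreover take $S_K$ to consist of indecomposables. If for each $s\in S_K$ there is a totally positive definite $\of{}$-lattice $L_s$ of rank at most $3$ (Lemma \ref{lem:rep-by-rk<=d+1}, since $[K:F]=2$) whose base change $L_s\otimes\ok{}$ represents $s$, then the orthogonal sum $L$ of the $L_s$ has $L\otimes\ok{}$ representing all of $S_K$ and hence universal, so $(F,K)$ enters Table \ref{table1}; conversely, if some $s\in S_K$ has no such $L_s$, then no $\of{}$-lattice lifts universally to $\ok{}$, since a universal lift would represent $s$ and then, by Lemma \ref{lem:rep-by-rk<=d+1}, so would the base change of one of its $\of{}$-sublattices of rank at most $3$. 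The search for each $L_s$ is finite: fixing $s$ bounds, via reduction theory over $\of{}$, the coefficients of a Minkowski-reduced Gram matrix of any candidate $L_s$, so only finitely many lattices must be tried.

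Each of these steps --- computing the discriminant bound, listing the extensions $K$, computing the indecomposables of $\ok{+}$ and a criterion set $S_K$, and running the bounded lattice search for each $s\in S_K$ --- is effective, and carrying them out for every admissible $D_F\le 200$, $D_F\neq 193$, produces precisely the entries of Table \ref{table1}; in particular every surviving pair has $D_F\le 56$, and no extension of any $F$ with $56<D_F\le 200$, $D_F\ne 193$, admits a universal lift.

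The main obstacle is computational cost rather than anything logical: the bound from Proposition \ref{prop:quad_general_bound} grows with $D_F$, so the number of candidate fields $K$, the sizes of their rings of integers, the number and magnitude of their indecomposables, and hence the criterion sets $S_K$, all grow, and the lattice search for each $s\in S_K$ is the expensive inner loop --- this is exactly why $D_F=177$ already takes several hours and $D_F=193$ is left out. A secondary point needing care is making the non-existence direction (\textit{no} $\of{}$-lattice works) into a genuinely finite verification, which is where the rank bound of Lemma \ref{lem:rep-by-rk<=d+1} together with explicit reduction over $\of{}$ is indispensable.
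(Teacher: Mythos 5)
Your overall strategy coincides with the paper's: use Proposition \ref{prop:quad_general_bound} to reduce to finitely many candidate extensions $K=F(\sqrt{\Delta})$, then for each candidate reduce the existence or non-existence of a universal lift to finitely many element-wise $F$-representability checks, each of which is a finite search thanks to the rank bound of Lemma \ref{lem:rep-by-rk<=d+1}. However, the middle step of your reduction has a genuine gap: you conflate a Chan--Oh universality criterion set with the set of indecomposables, and neither reading makes your argument work as written. If $S_K$ is a true criterion set (representing $S_K$ implies universality), then your plain orthogonal sum $\perp_{s}L_s$ does the job, but nothing in \cite{CO} or in the paper gives an algorithm to \emph{compute} such a set, so the procedure is not effective. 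If instead $S_K$ is a set of representatives of indecomposables modulo units --- which is what Theorem \ref{thm:equiv-existence-of-univ}(3) actually licenses --- then representing all of $S_K$ does \emph{not} by itself imply universality of a given lattice, and the paper's proof of that theorem has to take $\perp_{\sigma}L(\sigma)^{\perp s}$ with $s$ the Pythagoras number, relying on \cite[Proposition 7.1]{KT}. Moreover, even on the indecomposable route you give no method to enumerate the finitely many classes: the paper does this by pushing each indecomposable into a parallelepiped spanned by products of totally positive units (via \cite{DF}), which bounds its trace by an explicit constant $M(\mathfrak U)$ and turns the enumeration into listing totally positive elements of bounded trace. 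Without some such bound your ``finite check'' is not a terminating computation.

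Two smaller divergences are worth noting. Your rank-$3$ Minkowski-reduced search for $L_s$ is sound in principle, but the paper instead invokes the ``in particular'' clause of Lemma \ref{lem:rep-by-rk<=d+1} with the $g=2$ generators $1,w_\Delta$ of $\ok{}$: $F$-representability of $u+vw_\Delta$ becomes solvability of an explicit linear system in the coefficients $p,q,r$ of a \emph{binary} form with $p,r,4pr-q^2\succeq 0$ and the explicit bound $0\preceq r\preceq (4u+2vt)/\Delta$ (Lemma \ref{lem:solve-pqr}), i.e.\ a one-parameter search; this equivalence is what makes the computation practical. Finally, the paper needs a second pruning device for the candidate discriminants, Proposition \ref{prop:additional-bound-Delta} (built on Lemma \ref{lem:quadRounding}), without which the list produced by Proposition \ref{prop:quad_general_bound} alone is far too large to traverse once the fundamental unit of $F$ grows; your proposal omits this, which would leave the computation infeasible well before $D_F=177$.
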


This will be proved as Theorem \ref{thm:table}. We first use Proposition \ref{prop:quad_general_bound} to get a finite list of possible fields $K$, and then for each of them we check the $F$-representability (i.e., representability by an $\co_F$-lattice) of suitable (indecomposable) elements. This restricts us to the candidate fields from Table \ref{table1}. For each of these, we 
prove the existence of an $\co_F$-lattice whose lift to $K$ is universal by using Theorem \ref{thm:equiv-existence-of-univ}(3). To do this, we find all the indecomposables in $K$ using the geometry of numbers method based on Shintani's unit theorem, first developed by Kala--Tinkov\' a \cite{KT}, and then check their $F$-representability using Lemma \ref{lem:solve-pqr}.

As fields $F$ with larger discriminant $D_F$ seem to be much less likely to admit universality lifting, we in fact formulate
Conjecture \ref{conj:real quad lifts} that the list in Table \ref{table1} is complete.

While the class number 1 assumption is restrictive, it is not overly so. Out of 30 fundamental discriminants $<100$, only 4 (viz., $40,60,65,85$) give class number greater than 1; between $100$ and $200$, it is 10 out of 30. Also for Conjecture \ref{conj:real quad lifts}, the very hard Class Number One Problem (and, e.g., Cohen--Lenstra heuristics) predicts that there are 
infinitely many real quadratic fields with class number 1.

\medskip

Finally, in order to go beyond the restriction placed by the assumption that $F$ has class number 1, we consider extensions by square roots of positive integers.

\begin{thm}\label{thm:intro3}
Let $F$ be a totally real number field and $K=F(\sqrt e)$ where $e>0$ is a square-free integer such that the discriminants of $F$ and $\Q(\sqrt e)$ are coprime.

If $e\neq 5$, then there is no $\of{}$-lattice $L$ such that $L\otimes\ok{}$ is universal over $\ok{}$.

If $F$ is real quadratic and there is an $\of{}$-lattice $L$ such that $L\otimes \co_{F(\sqrt 5)}$ is universal over $\co_{F(\sqrt 5)}$, then $D_F\le 4076$. 
\end{thm}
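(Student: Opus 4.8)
The plan is to exploit the explicit relative integral basis that the coprimality hypothesis supplies. Write $\omega$ for the standard generator of $\co_{\Q(\sqrt e)}$, so $\omega=\sqrt e$ if $e\equiv 2,3$ and $\omega=\tfrac{1+\sqrt e}{2}$ if $e\equiv 1\Mod{4}$; its minimal polynomial over $\Z$ is $x^{2}-tx-m$ with $(t,m)=(0,e)$, resp.\ $(t,m)=\bigl(1,\tfrac{e-1}{4}\bigr)$, so $m>0$ and $t^{2}+4m=\mathrm{disc}\,\Q(\sqrt e)$. Since $\mathrm{disc}\,F$ and $\mathrm{disc}\,\Q(\sqrt e)$ are coprime, $\ok{}=\of{}\oplus\of{}\omega$. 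Now let $L$ be any totally positive definite $\of{}$-lattice, with quadratic form $Q$ and symmetric bilinear form $B$, so that $2B(x,y)=Q(x+y)-Q(x)-Q(y)\in\of{}$. Base change gives $L\otimes\ok{}=L\oplus L\omega$, and for $v=x+y\omega$ with $x,y\in L$, using $\omega^{2}=t\omega+m$,
\[
Q(v)=\bigl(Q(x)+mQ(y)\bigr)+\bigl(2B(x,y)+tQ(y)\bigr)\,\omega .
\]
Hence $L\otimes\ok{}$ represents $\alpha=a+b\omega\in\ok{+}$ precisely when there are $x,y\in L$ with
\begin{equation}
Q(x)+mQ(y)=a,\qquad 2B(x,y)+tQ(y)=b. \tag{$\ast$}
\end{equation}
This identity, together with the Cauchy--Schwarz inequality $\bigl(2B(x,y)\bigr)^{2}\le 4Q(x)Q(y)$ (valid under every real embedding, hence between totally positive elements), is the only input I expect to need; in particular Theorem~\ref{thm:intro3} should require neither the bounded-rank reduction of Lemma~\ref{lem:rep-by-rk<=d+1} nor the indecomposable criterion of Theorem~\ref{thm:equiv-existence-of-univ}.

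For the case $e\neq 5$ I would apply $(\ast)$ with $b=1$. The first relation forces every conjugate of $Q(y)$ to be at most the corresponding conjugate of $a/m$; but the only totally positive element of $\of{}$ all of whose conjugates are $<1$ is $0$ (a nonzero algebraic integer has $|\mathrm N_{F/\Q}|\ge 1$). So it suffices to produce $a\in\Z_{>0}$ with $a+\omega$ totally positive and $a<m$: then $Q(y)=0$, hence $y=0$ by definiteness, and the second relation of $(\ast)$ reads $0=1$. Total positivity of $a+\omega$ amounts to $a>-\omega^{(2)}$, where $\omega^{(2)}\in\{-\sqrt e,\tfrac{1-\sqrt e}{2}\}$ is the negative conjugate of $\omega$; an elementary estimate shows the interval $(-\omega^{(2)},m)$ has length $>1$ — hence contains an integer — for every squarefree $e\ge 2$ except $e=2$ and $e=5$, and $e=5$ is excluded by hypothesis. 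The lone remaining case $e=2$ (where $m=2$ and the smallest admissible integer $a$ is $2$) is dispatched by the same identity: $(\ast)$ then gives $Q(x)+2Q(y)=2$, so $Q(y)\in\{0,1\}$; if $Q(y)=1$ then $Q(x)=0$, so $x=0$, and (as $t=0$) the second relation again gives $0=1$. Thus for all $e\neq 5$ no $\of{}$-lattice $L$ can have $L\otimes\ok{}$ universal, which is the first assertion. The only mildly delicate point here is the uniform (in the base $F$) treatment of the small exceptional value $e=2$, which is why I keep the choice $a\in\Z$ rather than allowing general elements of $\of{}$.

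For the case $e=5$ (where $t=1$, $m=1$, $\omega=\tfrac{1+\sqrt5}{2}$, and $(\ast)$ no longer obstructs bounded elements — e.g.\ $1+\omega=\omega^{2}$ is a square, hence represented by $\langle1\rangle\otimes\ok{}$) the plan is to use $(\ast)$ differently. With $S=Q(y)$, $Q(x)=a-S$ and $2B(x,y)=b-S$, Cauchy--Schwarz shows that $\alpha=a+b\omega$ is represented by $L\otimes\ok{}$ only if some $S\in\of{}$ satisfies $(b-S)^{2}\le 4(a-S)S$, i.e.\ $5S^{2}-(4a+2b)S+b^{2}\le 0$. As $a^{2}+ab-b^{2}=\mathrm N_{K/F}(\alpha)$, this says that each conjugate $\sigma(S)$ lies in the interval of length $\tfrac45\sqrt{\sigma(\mathrm N_{K/F}\alpha)}$ centred at $\tfrac15\,\sigma(2a+b)$ — i.e.\ that an explicit rectangle in $\R^{2}$ meets the lattice $\of{}$, a solvability condition of the kind handled by Lemma~\ref{lem:solve-pqr}. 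I would then pick $\alpha$ to be a totally positive relative unit of $\ok{}$ (so $\mathrm N_{K/F}(\alpha)=1$; such $\alpha$ form a group of rank $2$, located via Shintani's unit theorem and the geometry-of-numbers search for units/indecomposables in the biquadratic field $K$ used to prove Theorem~\ref{thm:table}) subject to a congruence modulo $5$ on $2a+b$ that makes the rectangle's two centre coordinates differ by a nonzero integer multiple of $\tfrac15\sqrt{D_F}$: the rectangle then has side $\tfrac45$, so it contains no rational integer, and the unique candidate off-diagonal point of $\of{}$ also misses it once $D_F$ is large. Optimizing over the admissible relative units and over the two residues of $D_F$ modulo $4$ should produce the stated bound $D_F\le 4076$. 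The step I expect to be hardest is exactly this construction: one has to control the totally positive relative unit group of the degree-$4$ field $K$ — and, via continued fractions, the transverse sparsity of $\of{}\subset\R^{2}$ in terms of $D_F$ — precisely enough to supply such a unit for \emph{every} $D_F>4076$ with $5\nmid D_F$ and to pin down the exact constant. (Alternatively, one may try to pass $L$ down to the class-number-one subfield $\Q(\sqrt5)\subseteq K$ and invoke Proposition~\ref{prop:quad_general_bound} for $K/\Q(\sqrt5)$, whose relative discriminant grows with $D_F$; but the required descent of $\of{}$-lattices to $\co_{\Q(\sqrt5)}$-lattices is not automatic.)
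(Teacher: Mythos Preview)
Your treatment of the case $e\neq 5$ is correct and essentially matches the paper's argument (Theorem~\ref{prop:lifting-fail-sqrt(d)-extn}): both exhibit an explicit rational integer $a$ with $a+\omega$ totally positive but not $F$-representable, via the relations you label~$(\ast)$. The only cosmetic difference is that the paper's choice $a=1+\lfloor\sqrt{e}\rfloor$ (resp.\ $\lfloor\omega_e\rfloor$) absorbs $e=2$ without a separate case.

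The $e=5$ part has a genuine gap. Your plan is to take $\alpha=a+b\omega$ a totally positive \emph{relative unit}, so that $N_{K/F}(\alpha)=a^{2}+ab-b^{2}=1$ and the interval for each conjugate of $S$ has length exactly $\tfrac45$; you then hope to position the resulting rectangle away from $\of{}$. But $N_{K/F}(\alpha)=1$ rewrites as $(2a+b)^{2}=4+5b^{2}\equiv 4\pmod{5\of{}}$. When $5$ is \emph{inert} in $F$ --- which happens whenever $D_F$ is a non-residue mod~$5$, hence for infinitely many $D_F$ of arbitrary size --- the quotient $\of{}/5\of{}$ is the field $\mathbb{F}_{25}$, so necessarily $2a+b\equiv\pm 2\pmod{5\of{}}$. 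Then one endpoint $\tfrac{2a+b\pm 2}{5}$ of your $S$-interval already lies in $\of{}$; taking $S$ equal to it gives $r=S$, $p=a-S$, $q=b-S$ with $p+r=a\succ 0$ and $4pr=q^{2}\succeq 0$, whence $p,r\succeq 0$ under every embedding. Thus \emph{every} totally positive relative norm-one unit is $F$-representable (by a degenerate binary form) whenever $5$ is inert in $F$, and your construction cannot get started for such $F$, no matter how large $D_F$ is. No amount of optimizing over units or over $D_F\bmod 4$ rescues this.

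The paper's proof (Theorem~\ref{thm:extn-by-sqrt5}) avoids units entirely. It fixes $a\in\tau+\Z$ and $b\in 2\tau+\Z$ with real images chosen so that $\rho_1(b/a)$ is pinned extremely close to $\epsilon=\omega$; this forces the $\rho_1$-range for $r/a$ coming from Cauchy--Schwarz into roughly $[0.592,\,0.8379]$. Independently, the congruence $a\in\tau+\Z$ gives $\rho_1(a)-\rho_2(a)=\sqrt{D_F}$, and since any $r\in\of{}$ with $0\preceq r\preceq a$ has $\rho_1(r)-\rho_2(r)\in\Z\sqrt{D_F}$, one obtains $\rho_1(r/a)\in[0,\,0.151]\cup[0.838,\,1]$. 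The two ranges are disjoint once $D_F\ge 4077$, and the constant $4076$ drops out of this numerical comparison rather than from any unit-group structure.
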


This result is contained in Theorems \ref{prop:lifting-fail-sqrt(d)-extn} and \ref{thm:extn-by-sqrt5}.
Note that it provides extra evidence for Conjecture \ref{conj:real quad lifts} in the case when $K$ is a real biquadratic field.

\medskip

To conclude and summarize the Introduction, our results strongly back up the expectation that universality lifting is quite rare. Significantly, we now have data supporting this in cases of more general base fields than just the rationals $\Q$. One may be tempted to hypothesize that the finiteness result of Theorem \ref{thm:intro1} holds even without fixing the relative degree $d$, and then attempt to establish more classification results akin to our Theorems \ref{thm:intro2} and \ref{thm:intro3}. We anticipate that the further development of these ideas will lead to an exciting research program requiring many new ideas.

\section*{Acknowledgments}

We thank Jakub Kr\' asensk\' y for interesting and helpful discussions about the paper.

\section{Preliminaries}\label{sec:2}

\subsection{Notations and terminologies} 
Let $K/F$ be number fields of a degree $d=[K:F]$ and let $\sigma_1,\ldots,\sigma_d$ be embeddings from $K$ into $\mathbb{C}$ which fix $F$. The (relative) {\em norm} map and the (relative) {\em trace} map $N_{K/F}:K\ra F$ and $\tr{K/F}:K\ra F$ are defined by $N_{K/F}(\alpha)=\sigma_1(\alpha)\cdots \sigma_d(\alpha)$ and $\tr_{K/F}(\alpha)=\sigma_1(\alpha)+\cdots + \sigma_n(\alpha)$ for $\alpha\in K$, respectively. In case when the base field $F=\q$, we simply write the norm and the trace map as $N$ and $\tr$. The {\it relative discriminant} $\Delta_{K/F}$ is defined to be the ideal in $F$ generated by $\{\det(\tr_{K/F}(a_i a_j))_{i,j=1,\ldots, d} : a_1, \ldots, a_d \in \ok{}\}$. Note that when $\ok{}$ is free $\of{}$-module with a basis $b_1, \ldots, b_d$, then $\Delta_{K/F}$ is the principal ideal generated by $\det(\tr_{K/F}(b_i b_j))$.

Throughout this paper, $F$ will denote a totally real number field of degree $n=[F:\q]$ over $\q$, and $K$ will denote a totally real number field containing $F$ such that $[K:F]=d$. Let $\rho_1=\mathrm{id}, \ldots , \rho_n :F\ra \mathbb{R}$ be the real embeddings of $F$. We say $\alpha\in F$ is {\em totally positive} if $\rho_i(\alpha)>0$ for all $1\le i\le n$, and write $\alpha \succ 0$. We denote by $\of{+}$ the set of all totally positive elements in $\of{}$ and denote by $\of{\times}$ the set of all units of $\of{}$. Moreover, we write $\of{\times,+}=\of{\times}\cap \of{+}$. An element $\alpha\in\of{+}$ is called {\em indecomposable} if it cannot be written as $\alpha=\beta+\gamma$ with $\beta,\gamma\in\of{+}$.
	
Now we introduce the geometric language of quadratic spaces and lattices. A {\em quadratic space} over $F$ is a vector space $V$ over $\q$ equipped with a non-degenerate symmetric bilinear form $B:V\times V \ra \q$. We say $V$ is {\em totally positive definite} if the associated quadratic form $Q(v)=B(v,v)$ is totally positive definite, that is, $Q(v)>0$ for all $v\in V\setminus\{0\}$. Throughout the paper, we assume that $V$ is totally positive definite, unless stated otherwise.

A {\em quadratic $\of{}$-lattice} $L$ on $V$ is a finitely generated $\of{}$-module such that $FL=V$. We write $(L,Q)$ to indicate that the $\of{}$-lattice $L$ is associated with the quadratic map $Q$. For $\alpha\in\of{+}$, we say $L$ {\em represents} $\alpha$ if $Q(v)=\alpha$ for some $v\in L$. We say an $\of{}$-lattice is {\em universal} over $\of{}$ if it represents every element in $\of{+}$. For a field $K$ containing $F$, we may consider $L$ as an $\ok{}$-lattice, namely, we may consider the $\ok{}$-lattice $L\otimes_{\of{}}\ok{}$. Hence we say an $\of{}$-lattice $L$ is {\em universal} over $\ok{}$ if $L\otimes\ok{}$ is universal as an $\ok{}$-lattice. We may identify a free lattice with a quadratic form, namely, for $L=\of{}v_1 + \cdots + \of{} v_r$ we consider
\[
	Q_L(x_1,x_2,\ldots,x_r)=Q(x_1v_1+\cdots+x_rv_r)=\sum_{1\le i\le j\le r} a_{ij} x_ix_j \text{ with } a_{ij}\in\of{}.
\]

If the ring $\of{}$ is a principal ideal domain, then an $\of{}$-lattice $L$ has an integral basis, meaning that $L=\of{} v_1 + \cdots + \of{} v_r$ for a basis $v_1,\ldots ,v_r$ of $V$. In the case when $\of{}$ is not be a principal ideal domain, $L$ may not have an integral basis, but it can be generated by $\dim(V)+1$ elements as an $\of{}$-module (see 22:5 and 81:5 of \cite{OM2}). If we are given quadratic spaces $V_i$ over $F$ and $\of{}$-lattices $L_i$ on $V_i$ for $1\le i \le t$, the {\em orthogonal sum} of $L_i$ is the $\of{}$-lattice $L=L_1\perp \cdots \perp L_t$ on $V=V_1\perp \cdots \perp V_t$.

Any unexplained notation and terminology on quadratic lattices can be found in \cite{OM2}.

\subsection{Real quadratic number fields}\label{sec:real-quad-field}

Let $F = \q(\sqrt{d})$ be a real quadratic number field with $d>0$ a square-free integer. The ring $\of{}$ of algebraic integers of $F$ is described as
\begin{equation}\label{eqn:def-O_F}
\oo_F = \z+\z \tau, \text{ where } \tau:=\begin{cases} \frac{1+\sqrt{d}}{2} & \text{if } d \equiv 1 \Mod{4},\\
\sqrt{d} &\text{if } d \equiv 2,3 \Mod{4},\end{cases}
\end{equation}
and the discriminant $D_F:=\mathrm{Disc}(F)$ of $F$ is given as
\begin{equation}\label{eqn:def-D_F}
D_F = \begin{cases} d & \text{if } d \equiv 1 \Mod{4}, \\ 4d & \text{if } d \equiv 2,3 \Mod{4}.\end{cases}
\end{equation}

An integer $D$ is called a {\em fundamental discriminant} if it is the discriminant of a quadratic number field. Note from \eqref{eqn:def-D_F} that $D>1$ is a fundamental discriminant if and only if it is not divisible by any square of any odd prime and satisfies $D \equiv 1 \Mod{4}$ or $D \equiv 8,12 \Mod{16}$.

\section{General properties for lifting problem}\label{sec:3}

In this section, we discuss some properties of general lifting problem, asking for which totally real fields $F\subseteq K$ there exists an $\of{}$-lattice $L$ such that $L\otimes\ok{}$ is universal over $\ok{}$. Let us first introduce the following definition.

\begin{defn} Let $K,F$ be totally real fields such that $F \subseteq K$. We say an element $\alpha \in \ok{+}$ is {\it $F$-representable} if there exists a totally positive definite $\of{}$-lattice $L$ such that $L \otimes_{\of{}} \ok{}$ represents $\alpha$. 
\end{defn}

\begin{lem}\label{lem:rep-by-rk<=d+1}
	Let $K,F$ be totally real fields such that $F \subseteq K$ and $[K:F]=d$.
	If $\alpha\in\ok{+}$ is $F$-representable, then there exists an $\of{}$-lattice $L(\alpha)$ of rank at most $d+1$ such that $L(\alpha)\otimes\ok{}$ represents $\alpha$. In particular, if $\ok{}$ is generated by $g$ elements as an $\of{}$-module, say $\ok{}=\of{}w_1+\cdots+\of{}w_g$, then there is a totally positive semidefinite quadratic form $Q$ such that $Q(w_1,\ldots,w_g)=\alpha$.
\end{lem}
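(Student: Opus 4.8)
The plan is to begin with any witness of $F$-representability and cut it down to a lattice that only uses finitely many directions. Suppose $\alpha$ is $F$-representable; fix a totally positive definite $\of{}$-lattice $L$ on a quadratic space $V=FL$, with bilinear form $B$ and quadratic map $Q$, such that $L\otimes_{\of{}}\ok{}$ represents $\alpha$, i.e. there is a vector $v$ in the $\ok{}$-lattice $L\otimes_{\of{}}\ok{}$ (sitting inside $V\otimes_F K$) with $Q(v)=\alpha$. First I would fix a generating set $w_1,\dots,w_g$ of $\ok{}$ as an $\of{}$-module with $g\le d+1$; such a set exists because $\ok{}$ is a lattice of rank $d$ over the Dedekind domain $\of{}$ (22:5 and 81:5 of \cite{OM2}). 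Since $L\otimes_{\of{}}\ok{}=\sum_{j=1}^g L\otimes w_j$, one may write $v=\sum_{j=1}^g x_j\otimes w_j$ with $x_j\in L$.

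Next I would put $L(\alpha):=\of{}x_1+\cdots+\of{}x_g\subseteq L$. It is generated by $g\le d+1$ elements, so $\rank L(\alpha)\le d+1$, and it is an $\of{}$-lattice on the subspace $V(\alpha):=Fx_1+\cdots+Fx_g$, on which $Q$ stays totally positive definite (a restriction of a totally positive definite form is again totally positive definite, in particular non-degenerate), so $L(\alpha)$ is a totally positive definite $\of{}$-lattice. By construction $v=\sum_j x_j\otimes w_j$ already lies in $L(\alpha)\otimes_{\of{}}\ok{}$, and its quadratic map is the restriction of the one on $L\otimes\ok{}$; hence $L(\alpha)\otimes\ok{}$ represents $\alpha$. (One may note in passing that $\ok{}$ is flat over $\of{}$, so $L(\alpha)\otimes\ok{}$ literally embeds as a sublattice of $L\otimes\ok{}$, but this is not needed for the statement.)

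For the ``in particular'' clause, take $w_1,\dots,w_g$ now to be the given generating set of $\ok{}$, of whatever size $g$, write $v=\sum_j x_j\otimes w_j$ with $x_j\in L$ as above, and define the $g$-ary form
\[
	Q(X_1,\dots,X_g):=\sum_{j=1}^g Q(x_j)\,X_j^2+\sum_{1\le j<k\le g}\bigl(Q(x_j+x_k)-Q(x_j)-Q(x_k)\bigr)\,X_jX_k .
\]
Its coefficients lie in $\of{}$ since all the $x_j$ lie in $L$, and one has the identity $Q(X_1,\dots,X_g)=Q\bigl(\sum_j X_jx_j\bigr)$, where the $Q$ on the right is the quadratic map of $V$ (extended bilinearly). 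Consequently, evaluating at real arguments at any real place of $F$ returns the value of the totally positive definite $Q$ on the corresponding real vector, which is $\ge 0$, so the $g$-ary $Q$ is totally positive semidefinite. Finally, substituting $X_j=w_j$ and using the $K$-bilinearity of the extension of $B$ to $V\otimes_F K$ gives $Q(w_1,\dots,w_g)=Q\bigl(\sum_j w_jx_j\bigr)=Q(v)=\alpha$.

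Conceptually the argument is just ``one vector only meets finitely many lattice directions'', so the only place that needs genuine care is the bookkeeping around the identification $L\otimes_{\of{}}\ok{}\subseteq V\otimes_F K$ and the compatibility of the various quadratic maps under restriction to $L(\alpha)$ and under specialization at the $w_j$; that is the step I expect to be the main, if modest, obstacle. The one external input is the O'Meara bound that $\ok{}$ is generated by at most $d+1$ elements over $\of{}$, which is precisely what produces the rank bound $d+1$.
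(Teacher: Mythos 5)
Your proof is correct and follows essentially the same route as the paper: both expand the witness vector over a generating set $w_1,\dots,w_g$ of $\ok{}$ with $g\le d+1$ and read off a $g$-ary totally positive semidefinite form whose value at $(w_1,\dots,w_g)$ is $\alpha$. The only cosmetic difference is that you obtain the definite lattice directly as the sublattice $\of{}x_1+\cdots+\of{}x_g$ of the totally positive definite $L$, whereas the paper builds the (possibly degenerate) free lattice $(\of{g},Q_0)$ with Gram matrix $A^tM_\ell A$ and then splits off its radical; the two constructions yield isometric lattices.
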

\begin{proof}
	Assume that there is an $\of{}$-lattice $\ell$ of rank $r$ such that $\ell\otimes \ok{}$ represents $\alpha$. As $\ell$ and $\ok{}$ are finitely generated $\of{}$-module, let us fix generators of them as follows:
	\[
		\ell=\of{} v_1 + \cdots + \of{} v_r + \of{} v_{r+1} \quad \text{and} \quad \ok{}=\of{}w_1+\cdots+\of{}w_{g},
	\]
	where $g$ should be no less than $d$ and $g$ can be taken to satisfy $g\le d+1$.
	As $\alpha$ is represented by $\ell\otimes\ok{}$, there are $\beta_1,\ldots,\beta_{r+1}\in\ok{}$ such that $Q(\beta_1v_1+\cdots+\beta_{r+1}v_{r+1})=\alpha$. 
	
	Writing $\beta_i=\sum_{j=1}^{g} a_{ij}w_j$ with $a_{ij}\in\ok{}$ and putting $A=(a_{ij})_{(r+1)\times g}$, $M_\ell=(B(v_i,v_j))_{(r+1)\times(r+1)}$, $\bm{\beta}=(\beta_1,\ldots,\beta_{r+1})\in\ok{r+1}$, and $\bm{w}=(w_1,\ldots,w_{g})\in\ok{g}$, we have $\bm{\beta}=A\bm{w}$ and
	\begin{equation}\label{eqn:Q0repalpha}
		\alpha = Q(\beta_1v_1+\cdots+\beta_{r+1}v_{r+1})=\bm{\beta}^t M_\ell \bm{\beta} = \bm{w}^t(A^tM_\ell A)\bm{w}.
	\end{equation}
	We may associate a free $\of{}$-lattice $(L=\of{g},Q_0)$ which is totally positive semidefinite such that the Gram matrix of $Q_0$ is $A^tM_\ell A$. This $Q_0$ with \eqref{eqn:Q0repalpha} proves the in particular part of the lemma.
	
	We may assume that we have taken $g=d+1$. If $L$ is totally positive definite, then $L(\alpha):=L$ is the desired $\of{}$-lattice.
	In the case when $L$ is not totally positive definite, let us consider the {\it radical}  $\text{rad}(L):=\{v \in L : B(v,w)=0 \text{ for all } w\in L\}$ of $L$. We have the radical splitting $L=L'\perp \text{rad}(L)$, where $L'$ is a {\it regular} sublattice of $L$, meaning that $\text{rad}(L')=0$ (see \cite[p. 226]{OM2}). Note that $(L',Q=Q_0|_{L'})$ is of rank $\le d+1$ and $L'$ is totally positive definite: if $Q(v)=0$ for some $v\in L'$, then the Cauchy-Schwarz inequality $Q(v)Q(w)-B(v,w)^2\succeq 0$ implies that $B(v,w)=0$ for all $w\in L'$. Hence $v\in \text{rad}(L')=0$, which yields $v=0$. Furthermore, one may easily show that $L'$ represents $L$. Therefore, we may take $L(\alpha):=L'$. This proves the lemma.
\end{proof}

\begin{thm}\label{thm:equiv-existence-of-univ}
	Let $K,F$ be totally real fields such that $F \subseteq K$ and $[K:F]=d$. The following are equivalent.
	\begin{enumerate}[label={{\rm(\arabic*)}}]
		\item There exists an $\of{}$-lattice $L$ such that $L\otimes \ok{}$ is universal.
		\item Every element in $\ok{+}$ is $F$-representable.
		\item Every indecomposable element in $\ok{+}$ is $F$-representable.
	\end{enumerate} 
\end{thm}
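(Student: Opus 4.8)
The plan is to prove the cycle of implications $(1)\Rightarrow(2)\Rightarrow(3)\Rightarrow(1)$, in which only the last arrow carries real content. For $(1)\Rightarrow(2)$ I would observe that if $L$ is an $\of{}$-lattice with $L\otimes\ok{}$ universal, then $L$ is totally positive definite (it sits on a totally positive definite space, by our standing convention) and $L\otimes\ok{}$ represents every $\alpha\in\ok{+}$, so $L$ itself witnesses that each such $\alpha$ is $F$-representable. The implication $(2)\Rightarrow(3)$ is immediate, since an indecomposable element of $\ok{+}$ is in particular an element of $\ok{+}$.

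The work is in $(3)\Rightarrow(1)$. First I would record the elementary fact that every $\alpha\in\ok{+}$ is a finite sum of indecomposable elements of $\ok{+}$: one argues by induction on the positive rational integer $\tr_{K/\q}(\alpha)$, using that in any decomposition $\alpha=\beta+\gamma$ with $\beta,\gamma\in\ok{+}$ both traces are at least $1$ and sum to $\tr_{K/\q}(\alpha)$, so each summand has strictly smaller trace. Feeding this into hypothesis $(3)$, and using that $-\otimes\ok{}$ distributes over orthogonal sums — so that if totally positive definite $\of{}$-lattices $M_i$ satisfy that $M_i\otimes\ok{}$ represents $\gamma_i$ for $i=1,\dots,k$, then $M:=M_1\perp\cdots\perp M_k$ is totally positive definite and $M\otimes\ok{}$ represents $\gamma_1+\cdots+\gamma_k$ — I would conclude that \emph{every} element of $\ok{+}$ is $F$-representable, via a totally positive definite witnessing lattice (one could even take it of rank at most $d+1$ by Lemma~\ref{lem:rep-by-rk<=d+1}, though this is not needed here).

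Second, to manufacture a single universal lattice I would invoke the existence of a finite \emph{universality criterion set} for $\ok{}$ due to Chan--Oh \cite{CO}: a finite set $S=\{\alpha_1,\dots,\alpha_m\}\subseteq\ok{+}$ such that any totally positive definite quadratic $\ok{}$-lattice representing each of $\alpha_1,\dots,\alpha_m$ is universal. For each $j$, the previous step furnishes a totally positive definite $\of{}$-lattice $L_j$ with $L_j\otimes\ok{}$ representing $\alpha_j$; set $L:=L_1\perp\cdots\perp L_m$. Then $L$ is a totally positive definite $\of{}$-lattice, and $L\otimes\ok{}=(L_1\otimes\ok{})\perp\cdots\perp(L_m\otimes\ok{})$ represents every element of $S$, hence is universal. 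This is $(1)$, closing the cycle.

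The only non-elementary ingredient — and the step I would treat as the crux — is the Chan--Oh universality criterion set: it is precisely what lets the (generally infinitely many, even up to units) indecomposables be replaced by a finite list of representability conditions that can then be bundled into a single orthogonal sum. The rest is routine bookkeeping: the trace induction, the compatibility of $-\otimes\ok{}$ with $\perp$ and with representation of sums, and the verification that the lattices one works with are genuinely totally positive definite, so that the orthogonal sums stay within the category under consideration.
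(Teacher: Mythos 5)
Your proof is correct, but the crucial implication $(3)\Rightarrow(1)$ is handled by a genuinely different route than the paper's. The paper goes from $(3)$ to $(1)$ in one step using Kala--Tinkov\'a \cite[Proposition 7.1]{KT}: the diagonal form $\sum_{\sigma\in\mathcal S}\sigma(x_{1,\sigma}^2+\dots+x_{s,\sigma}^2)$ is universal over $\ok{}$, where $s$ is the Pythagoras number of $\ok{}$ and $\mathcal S$ is a finite set of representatives of indecomposables up to multiplication by squares of units; since each $L(\sigma)\otimes\ok{}$ represents $\sigma$ and hence every $\sigma y^2$, the lattice $\perp_{\sigma\in\mathcal S}L(\sigma)^{\perp s}$ is universal. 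You instead factor the argument as $(3)\Rightarrow(2)$ (every totally positive integer is a finite sum of indecomposables, by induction on the trace, and orthogonal sums of witnesses represent sums) followed by $(2)\Rightarrow(1)$ via a finite universality criterion set from Chan--Oh. Both arguments are sound; your appeal to \cite{CO} is legitimate (their Theorem 5.7 applied to rank-one lattices yields exactly such a finite criterion set), and in fact the paper uses the Chan--Oh machinery in precisely your manner in the proof of the subsequent finiteness theorem, where representing lattices are bundled into a single one. What the paper's route buys is a more explicit universal lattice, indexed by the concretely computable data (indecomposables modulo unit squares and the Pythagoras number) that the later computational sections actually produce; what your route buys is independence from the Pythagoras-number/sums-of-squares structure result, at the cost of the less explicit criterion-set existence theorem. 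One could quibble that you should note that representing an element $\alpha\in\ok{+}$ is the same as representing the free rank-one lattice $\langle\alpha\rangle$, so that the lattice-theoretic finiteness theorem really does specialize to a criterion set of elements, but this is routine.
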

\begin{proof}
	The implications $(1)\Rightarrow(2)$ and $(2)\Rightarrow(3)$ are trivial from the definition. To show the implication $(3)\Rightarrow (1)$, we use \cite[Proposition 7.1]{KT} which says that the diagonal quadratic form 
	$$\sum_{\sigma\in\mathcal S}\sigma\left(x^2_{1,\sigma}+x^2_{2,\sigma}+\dots+x^2_{s,\sigma} \right)$$
	is universal over $\co_K$, where $s$ is the Pythagoras number of $\co_K$ (i.e., the smallest positive integer such that every sum of squares equals the sum of $s$ squares) and $\mathcal S$ denotes a finite set of representatives of classes of indecomposables in $\co$ up to multiplication by squares of units $\co^{\times 2}$.
	From the assumption, there are $\of{}$-lattices $L(\sigma)$ such that $L(\sigma)\otimes\ok{}$ represents $\sigma\in\mathcal S$.
 
 Therefore the $\of{}$-lattice 
 $$\perp_{\sigma\in\mathcal S}L(\sigma)^{\perp s}$$
 is universal over $\ok{}$.
\end{proof}

\begin{thm}\label{cor:finiteness}
Let $F$ be a totally real number field and $d\in\n$. There are at most finitely many totally real fields $K\supseteq F$ with $[K:F]=d$ such that there is an $\of{}$-lattice $L$ such that $L\otimes\ok{}$ is universal.
\end{thm}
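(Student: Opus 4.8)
The plan is to reduce the statement to a finiteness result about the $F$-representability of a single, carefully chosen element, and then invoke Theorem \ref{thm:equiv-existence-of-univ}. By Theorem \ref{thm:equiv-existence-of-univ}(3), the existence of a universal $\of{}$-lift is equivalent to every indecomposable $\alpha\in\ok{+}$ being $F$-representable; and by Lemma \ref{lem:rep-by-rk<=d+1}, $F$-representability of $\alpha$ is witnessed by an $\of{}$-lattice of rank at most $d+1$, equivalently by a totally positive semidefinite quadratic form $Q$ over $\of{}$ with $Q(w_1,\dots,w_g)=\alpha$ for a fixed $\of{}$-generating set $w_1,\dots,w_g$ of $\ok{}$ with $g\le d+1$. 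So if $K$ admits a universal lift, then for \emph{every} indecomposable $\alpha$ there is such a semidefinite $Q$. The first step is therefore to pick one specific element of $\ok{+}$ that is forced to be ``large'' in all archimedean places — for instance, I would try to use an element like a suitable power of a totally positive generator, or more robustly exploit the existence of indecomposables of bounded but genuinely positive size — and derive an inequality that any representing semidefinite form must satisfy.

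The key estimate is the following. Fix $\alpha\in\ok{+}$ and suppose $Q(w_1,\dots,w_g)=\alpha$ with $Q$ totally positive semidefinite over $\of{}$, say with Gram matrix $G=(g_{ij})\in M_g(\of{})$, $g_{ij}\succeq 0$ on the diagonal. Applying an embedding $\sigma$ of $K$ over $F$ and then a real embedding of $F$, one sees that $\sigma(\alpha)$ is a value of a positive semidefinite \emph{real} quadratic form whose entries are $\rho_i(g_{ij})$ evaluated at the real numbers $\sigma\rho_i(w_j)$. Summing the diagonal entries $g_{ii}$ of $Q$ and using positive semidefiniteness, one can bound $\tr_{K/\q}(\alpha)$ from below and above in terms of $\tr_{F/\q}$ of the diagonal entries $g_{ii}$ and the (archimedean sizes of the) generators $w_j$. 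The point is that the $g_{ii}$ are totally positive \emph{algebraic integers} of $F$, hence each has $\tr_{F/\q}(g_{ii})\ge n$ unless $g_{ii}=0$; combined with a \emph{lower} bound on how much each $w_j$ contributes, this forces an inequality of the shape: if $\alpha$ is $F$-representable then $\tr_{K/\q}(\alpha)$ is bounded below by a positive constant depending only on $F$ and $d$ (via the minimal possible size of $\tr_{F/\q}$ of a nonzero totally positive integer and via how the generators $w_j$ sit inside $\ok{}$). Running this the other way: because $\ok{}$ always contains indecomposable elements $\alpha$ with $\sigma\rho_i(\alpha)$ \emph{small} in at least one place but the structure of $K$ forces $\alpha$ to be large in the others — or, more directly, because the minimum of $\tr_{K/\q}$ over $\ok{+}$ relative to $\tr_{F/\q}$ over $\of{+}$ grows with the discriminant of $K$ — only finitely many $K$ of degree $d$ over $F$ can satisfy the required inequality for all their indecomposables.

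Concretely, I would implement the last point by using Minkowski-type / geometry-of-numbers bounds: there is an indecomposable $\alpha\in\ok{+}$ all of whose conjugates over $F$ are bounded (independently of $K$, by Shintani-domain arguments as in \cite{KT}) except that at least one conjugate must be $\ge 1$, while $\operatorname{disc}(K)\to\infty$ forces the existence of elements of $\ok{}$ that are small in many embeddings; balancing these gives, for all but finitely many $K$ of relative degree $d$, an indecomposable $\alpha$ whose representation by any totally positive semidefinite $Q$ over $\of{}$ in $g\le d+1$ variables would contradict the trace inequality above. Since there are only finitely many number fields of bounded degree and bounded discriminant (Hermite–Minkowski), and $[K:\q]=nd$ is fixed, we conclude that only finitely many $K$ admit a universal $\of{}$-lift. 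The main obstacle I anticipate is making the ``small in many embeddings vs.\ forced to be large'' dichotomy quantitative and uniform in $K$: one needs a clean lower bound, depending only on $F$ and $d$, for the quantity $\min\{\tr_{K/\q}(\alpha) : \alpha\in\ok{+}\ \text{$F$-representable}\}$, and a matching statement that this minimum is exceeded by some indecomposable once $\operatorname{disc}(K)$ is large — this is exactly where the interplay between indecomposables, the Shintani fundamental domain, and the geometry of $\ok{}$ inside $\prod_\sigma\R$ has to be handled carefully. I expect the cleanest route is to isolate a single inequality of the form ``$F$-representable $\Rightarrow \tr_{K/\q}(\alpha)\le C(F,d)\cdot(\text{something controlled})$'' and then exhibit, via Minkowski's convex body theorem applied in $\ok{}\otimes\R$, an indecomposable violating it whenever $|\operatorname{disc}(K)|$ exceeds an explicit bound.
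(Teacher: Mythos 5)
Your opening reduction is the same as the paper's first step: by Lemma \ref{lem:rep-by-rk<=d+1}, if $K$ admits a universal lift then every $\alpha\in\ok{+}$ is represented by some $\of{}$-lattice $L(\alpha)$ of rank at most $d+1$. But the heart of your proposal --- a trace inequality with a constant $C(F,d)$ that every representing totally positive semidefinite form must satisfy, to be violated by some indecomposable once $\operatorname{disc}(K)$ is large --- has a genuine gap. The representing lattice $L(\alpha)$ varies with $\alpha$ and with $K$, and nothing bounds its Gram entries $g_{ij}$ or the archimedean sizes of the generators $w_j$ of $\ok{}$; so no inequality of the shape you describe can be extracted uniformly over all such forms. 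Moreover, the quantitative claims you lean on are either vacuous or false: the lower bound ``$F$-representable $\Rightarrow\tr_{K/\q}(\alpha)\ge$ const'' holds for \emph{every} totally positive integer (the trace is always at least $[K:\q]=nd$, attained at $\alpha=1$), the minimum of $\tr_{K/\q}$ on $\ok{+}$ does \emph{not} grow with the discriminant, and a large discriminant makes $\ok{}$ sparser, i.e.\ it works against (not for) the existence of elements small in many embeddings. In effect you are trying to reprove the fixed-form finiteness of \cite{KY2} simultaneously for an unbounded family of forms, which is precisely the difficulty the theorem is about.

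The missing idea is a uniformization step over the infinitely many possible representing lattices. The paper achieves this with the recent theorem of Chan--Oh \cite{CO} on representability criterion sets: for each rank $k$ there is a single $\of{}$-lattice $L_k$ (depending only on $F$ and $k$) representing all totally positive $\of{}$-lattices of rank $k$, so $\mathcal L=L_1\perp\cdots\perp L_{d+1}$ depends only on $F$ and $d$ and represents every $L(\alpha)$; hence if $K$ admits \emph{any} universal lift, then this one fixed lattice $\mathcal L$ already lifts to a universal lattice over $\ok{}$. At that point the problem is reduced to the weak lifting problem for the fixed lattice $\mathcal L$, and the finiteness of the set of such $K$ is exactly \cite[Theorem 2]{KY2}. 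If you want to avoid the Chan--Oh black box, you would need some substitute mechanism that bounds, uniformly in $K$, the data of the representing forms --- your sketch does not supply one.
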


\begin{proof}
Let us start by considering a field $K\supseteq F$ with $[K:F]=d$ such that there is an $\of{}$-lattice $L$ such that $L\otimes\ok{}$ is universal. By Lemma \ref{lem:rep-by-rk<=d+1}, for each element $\alpha\in\ok{+}$, there is an $\of{}$-lattice $L(\alpha)$ of rank at most $d+1$ such that $L(\alpha)\otimes\ok{}$ represents $\alpha$. 

Now we use a theorem of Chan--Oh \cite[Theorem 5.7]{CO} that for an infinite set $\mathscr{S}$ of $\of{}$-lattices of fixed rank, there exists a finite subset $\mathscr{S}_0$ of $\mathscr{S}$ such that if an $\of{}$-lattice represents all lattices in $\mathscr{S}_0$, then it represents all lattices in $\mathscr{S}$. Thus, there even exists an $\of{}$-lattice $L_\mathscr{S}$ that represents all $\of{}$-lattices in $\mathscr{S}$: for example, $L_\mathscr{S}$ can be taken as $\perp_{\ell\in\mathscr{S}_0}\ell$. 
In particular, for each $k\in\n$ we can take an $\of{}$-lattice $L_k$ that represents all $\of{}$-lattices of rank $k$. 

Let  $\mathcal{L}=L_1\perp\cdots\perp L_{d+1}$. Then $\mathcal{L}$ represents $L(\alpha)$ for all $\alpha\in\ok{+}$, and hence $\mathcal{L}\otimes\ok{}$ represents all $\alpha\in\ok{+}$, i.e., it is universal over $K$.

We have thus established that if $K$ admits a universal $\of{}$-lattice, then in fact $\mathcal{L}\otimes\ok{}$ is universal over $K$.
But this is a situation when we can apply \cite[Theorem 2]{KY2} according to which there are only finitely many such fields $K$.
\end{proof}

\section{Lifting problems for quadratic extensions over PIDs}\label{sec:4}

In this section, we discuss lifting problems for quadratic extensions $K/F$ of totally real number fields where the base field $F$ has class number 1 (i.e., $\of{}$ is a principal ideal domain). In Section \ref{subsec-whenOFisPID}, we then further specialize to the case when $F$ is a real quadratic field.

First, we give a certain finiteness bound on relative discriminant of $K/F$ for those $K$ having an $\of{}$-lattice such that $L\otimes\ok{}$ is universal. 

\begin{prop}\label{prop:quad_general_bound}
	Let $K/F$ be a quadratic extension of totally real number fields, where $F$ has class number $1$, and let $\Delta_{K/F}$ be the relative discriminant ideal. Fix a set of representatives $U_F$ of the additive group $\of{}/2\of{}$. Then: 
	\begin{enumerate}[label={\rm{(\arabic*)}}, leftmargin=*]
        \item There exists an element $\Delta \in \of{+}$ that is a generator of $\Delta_{K/F}$ and $K=F(\sqrt{\Delta})$. Such  $\Delta\in\of{+}$ is determined uniquely up to multiplication by squares of units in $\of{}$.

		\item For a given $\Delta\in\of{+}$ satisfying $\mathrm{(1)}$, there is a unique element $w_\Delta\in\ok{}$ such that

		\begin{equation}\label{eqn:condw_Delta}
		\ok{}=\of{}1+\of{}w_\Delta, \ t\in U_F, \ \Delta=t^2-4n, \text{ and } w_\Delta=\frac{t+\sqrt{\Delta}}{2},
		\end{equation}
		where $t:=\tr_{K/F}(w_\Delta)$ and $n:=\mathrm{N}_{K/F}(w_\Delta)$.
		\item Assume that a generator $\Delta\in\of{+}$ of $\Delta_{K/F}$ in $\mathrm {(1)}$ satisfies the following: 
		\begin{equation}\label{eqn:condDelta}
			\Delta \succeq u^2 \text{ and } \Delta \succeq (2-u)^2 \text{ for all } u \in U_F; \text{ and additionally } \Delta \succeq 9 \text{ if }\Delta \equiv 1 \Mod{ 4\of{}}.
		\end{equation}
		Then there exists $m \in \of{}$ such that $m+w_\Delta$ is totally positive but not $F$-representable. Thus for $K=F(\sqrt{\Delta})$, there is no $\of{}$-lattice $L$ such that $L\otimes\ok{}$ is universal.
	\end{enumerate}
\end{prop}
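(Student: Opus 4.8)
The plan is to dispatch (1) and (2) as essentially formal, and to concentrate on (3), where the real work lies.

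Parts (1) and (2) follow from the structure theory of quadratic extensions together with the PID hypothesis. For (1): write $K=F(\sqrt\alpha)$ with $\alpha\in\of{}$ (necessarily $\alpha\succ 0$, since $K$ is totally real); the order $\of{}[\sqrt\alpha]$ has discriminant ideal $(4\alpha)$, and dividing out the square of the principal index ideal $[\ok{}:\of{}[\sqrt\alpha]]$ produces a totally positive generator $\Delta$ of $\Delta_{K/F}$ with $F(\sqrt\Delta)=K$; two such generators differ by a unit which is a square in $K$, and a short square-class argument shows such a unit is a square of a unit of $\of{}$, giving the stated uniqueness. For (2): since $\ok{}$ is free of rank $2$ over the PID $\of{}$ it is monogenic, $\ok{}=\of{}[\theta]$ with $\theta$ of minimal polynomial $x^2-t_0x+n_0$; then $t_0^2-4n_0$ is again a totally positive generator of $\Delta_{K/F}$ with $F(\sqrt{t_0^2-4n_0})=K$, so by the uniqueness in (1) it equals $\mathfrak v^2\Delta$ for a unit $\mathfrak v$, hence $\Delta\equiv(\mathfrak v^{-1}t_0)^2\Mod{4\of{}}$. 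Taking $t\in U_F$ in the class of $\mathfrak v^{-1}t_0$, $n:=(t^2-\Delta)/4\in\of{}$, and $w_\Delta:=(t+\sqrt\Delta)/2$, one checks $\{1,w_\Delta\}$ has discriminant $\Delta$, hence is an $\of{}$-basis of $\ok{}$, and uniqueness of $w_\Delta$ follows since $\of{}1+\of{}w_\Delta=\of{}1+\of{}w_\Delta'$ forces $t\equiv t'\Mod{2\of{}}$.

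For (3), by Theorem \ref{thm:equiv-existence-of-univ} it suffices to exhibit one element of $\ok{+}$ that is not $F$-representable, and we seek it in the form $m+w_\Delta$ with $m\in\of{}$. Because $\of{}$ is a PID, $\ok{}=\of{}1+\of{}w_\Delta$, so by Lemma \ref{lem:rep-by-rk<=d+1} (with $g=2$) an element $\alpha\in\ok{+}$ is $F$-representable if and only if there is a totally positive semidefinite binary quadratic form $Q(x,y)=ax^2+bxy+cy^2$ over $\of{}$ with $Q(1,w_\Delta)=\alpha$: the forward implication is the Lemma, and the converse follows by replacing $Q$ by the regular — hence totally positive definite — part of its radical splitting, which still represents $\alpha$ over $\ok{}$. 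Using $w_\Delta^2=tw_\Delta-n$, the equation $Q(1,w_\Delta)=m+w_\Delta$ forces $b=1-ct$ and $a=m+cn$, so the only candidates are
\[
Q_c(x,y)=(m+cn)x^2+(1-ct)xy+cy^2,\qquad c\in\of{},
\]
and a direct computation (using $t^2-4n=\Delta$) gives $\mathrm{disc}(Q_c)=c^2\Delta-2c(t+2m)+1$. Thus $m+w_\Delta$ fails to be $F$-representable exactly when no $Q_c$ is totally positive semidefinite.

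Write $t':=t+2m$, so $m+w_\Delta=\tfrac{t'+\sqrt\Delta}2$. The plan is to choose $m$ so that
\begin{equation}\label{eqn:sandwich}
\sqrt{\sigma(\Delta)}<\sigma(t')<\tfrac12\bigl(\sigma(\Delta)+1\bigr)\qquad\text{for every real embedding }\sigma\text{ of }F.
\end{equation}
Given \eqref{eqn:sandwich}, the left inequality makes $m+w_\Delta$ totally positive, and I claim it is not $F$-representable. Indeed, $Q_0(x,y)=mx^2+xy$ has discriminant $1\succ 0$, so it is not semidefinite; and if $c\ne 0$ made $Q_c$ totally positive semidefinite, then $c\succ 0$, and for each $\sigma$ we would have $\sigma(c)^2\sigma(\Delta)-2\sigma(c)\sigma(t')+1\le 0$; since $\sigma(t')>\sqrt{\sigma(\Delta)}$, this forces $\sigma(c)$ to lie between the (positive) roots of $\sigma(\Delta)X^2-2\sigma(t')X+1$, hence $0<\sigma(c)\le\frac{\sigma(t')+\sqrt{\sigma(t')^2-\sigma(\Delta)}}{\sigma(\Delta)}<1$, the last step being the right inequality in \eqref{eqn:sandwich}. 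Then $0<N_{F/\q}(c)<1$, contradicting $c\in\of{}\setminus\{0\}$. By Theorem \ref{thm:equiv-existence-of-univ}, the existence of such an $m$ then shows no $\of{}$-lattice $L$ has $L\otimes\ok{}$ universal, proving (3).

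It remains to produce $m\in\of{}$, equivalently $t'\in t+2\of{}$, lying in the open box $\prod_\sigma\bigl(\sqrt{\sigma(\Delta)},\tfrac12(\sigma(\Delta)+1)\bigr)\subset\R^n$. This lattice-point existence is precisely where the hypotheses \eqref{eqn:condDelta} enter: the bounds $\Delta\succeq u^2$ and $\Delta\succeq(2-u)^2$ for $u\in U_F$ control, in each archimedean coordinate and in terms of $\sqrt{\sigma(\Delta)}$, the size of a fundamental domain for the coset $t+2\of{}$, so that each coordinate interval — of length $\tfrac14(\sqrt{\sigma(\Delta)}-1)^2$ once the shift by $t$ is divided out — is long enough, and correctly placed, to meet the coset; the additional requirement $\Delta\succeq 9$ when $\Delta\equiv 1\Mod{4\of{}}$ is needed because there $t$, hence $t'$, is forced to be odd, which halves the relevant lattice spacing. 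I expect this last step — a geometry-of-numbers argument, with the case $\Delta\equiv 1\Mod{4\of{}}$ treated separately — to be the technical heart of the proof; everything preceding it is bookkeeping (parts (1), (2), and the reduction to the forms $Q_c$) or a one-variable inequality.
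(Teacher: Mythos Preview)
Your reduction in (3) is clean and correct: the parametrisation by $Q_c$, the discriminant formula $c^2\Delta-2ct'+1$, and the sandwich criterion \eqref{eqn:sandwich} all check out (one should note in passing that the step ``larger root $<1$'' uses $\sigma(\Delta)>1$, which does follow from the hypothesis applied to the representative of $0\in\of{}/2\of{}$, but you do not say so). The genuine gap is the last paragraph. You do not construct $t'$; you only assert that a geometry-of-numbers argument will produce a point of the coset $t+2\of{}$ in the box $\prod_\sigma\bigl(\sqrt{\sigma(\Delta)},\tfrac12(\sigma(\Delta)+1)\bigr)$, and your explanation of how the hypotheses \eqref{eqn:condDelta} enter --- that they ``control the size of a fundamental domain for the coset'' --- is not right. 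The conditions $\Delta\succeq u^2$, $\Delta\succeq(2-u)^2$ are coordinate-wise lower bounds on $\sigma(\Delta)$ in terms of the fixed finite set $U_F$; they say nothing about the covolume or shape of $2\of{}$ and do not, by themselves, force the box (whose $\sigma$-side has length $\tfrac14(\sqrt{\sigma(\Delta)}-1)^2$, possibly small in some embedding) to meet the coset. A Minkowski-type bound would need a product condition on $\Delta$ that is not available.

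The paper avoids this entirely by making an \emph{explicit} choice rather than a covering argument: it takes $m_1=(\Delta-t^2)/4$, so that the first representation equation becomes $p+m_1 r=m_1$ with $p,r\succeq 0$, and a one-line norm inequality forces $p=0$ or $r=0$; combined with $q+tr=1$ and $4pr-q^2\succeq 0$ this leaves only the possibility $t=1$. That residual case (exactly $\Delta\equiv 1\bmod 4\of{}$) is dispatched by a second explicit choice $m_2=(\Delta-1)/4-1$, using $\Delta\succeq 9$. In your language, the paper's $m_1$ gives $t'=t+(\Delta-t^2)/2$, which one computes satisfies $\sigma(t')-\sqrt{\sigma(\Delta)}=\tfrac12\bigl((\sqrt{\sigma(\Delta)}-1)^2-(\sigma(t)-1)^2\bigr)\ge 0$ and $\tfrac12(\sigma(\Delta)+1)-\sigma(t')=\tfrac12(\sigma(t)-1)^2\ge 0$ --- so it sits on the \emph{closure} of your box, and your strict-inequality criterion does not apply to it directly; the paper's norm argument handles the boundary cases without needing strictness. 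The upshot: your framework is sound, but the missing step is not a routine lattice-point count --- it is the specific algebraic choice of $m$, and that is where the hypotheses are actually consumed.
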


\begin{proof}
    (1) As $\of{}$ is principal and $1\in\of{}$ is primitive, one can consider an $\of{}$-basis $\{1,v\}$ of $\ok{}$, namely, $\ok{}=\of{}1+\of{}v$. For such a basis, define $\Delta_v:=\det\left(\begin{smallmatrix}
		\tr_{K/F}(1) & 	\tr_{K/F}(v)\\ 	\tr_{K/F}(v) & 	\tr_{K/F}(v^2)
	\end{smallmatrix}\right)$. One may observe that $\Delta_v=\tr_{K/F}(v)^2-4\mathrm{N}_{K/F}(v)$, and $\Delta_v$ is a generator of the relative discriminant $\Delta_{K/F}$. As $v^2-\tr_{K/F}(v)v+\mathrm{N}_{K/F}(v)=0$, we have $v=\frac{\tr_{K/F}(v)\pm\sqrt{\Delta_v}}{2}$. Thus $K=F(v)=F(\sqrt{\Delta})$, and $\Delta\in\of{+}$ since $K$ is a totally real number field.

    To show the uniqueness, assume that $F(\Delta_1)=F(\Delta_2)$ for some generators $\Delta_1,\Delta_2\in\of{+}$ of $\Delta_{K/F}$. Then $\Delta_2=\Delta_1 f^2$ for some $f\in F$, and since $\Delta_1,\Delta_2$ are generators of $\Delta_{K/F}$, we have $f\in\of{\times}$.
	
	(2) We first show the existence. From (1) and under the same notation given in the proof of (1), we have $\Delta=\Delta_v \epsilon^2$ for some $\epsilon\in\of{\times}$. Letting $w=v\epsilon^{-1}$, we have
    \[
        w = v \epsilon^{-1} = \frac {\epsilon^{-1} \tr_{K/F}(v)  + \sqrt{ \epsilon^{-2} \Delta_v}}{2} = \frac {\tr_{K/F}(w)  + \sqrt{ \Delta}}{2}
    \]
    Since $\tr_{K/F}(w-c)=\tr_{K/F}(w)-2c$ for any $c\in\of{}$, there exists a unique $c\in\of{}$ such that $\tr_{K/F}(w-c)\in U_F$. If we put $w_\Delta=w-c$ for such $c\in\of{}$, then one may easily check that $w_\Delta$ (or its conjugate) satisfies the conditions in \eqref{eqn:condw_Delta}.

	To show the uniqueness, consider two $w, w'\in\ok{}$ satisfying \eqref{eqn:condw_Delta} instead of $w_\Delta$. From $\ok{}=\of{}1+\of{}w=\of{}1+\of{}w'$, we may write $w'=a+bw$ for some $a,b\in\of{}$. Since $\Delta=\Delta_{w'}=b^2\Delta_{w}=b^2\Delta$, we have $b=\pm 1$. Assume that $b=-1$, that is, $w'=a-w$. Then noting that $\tr_{K/F}(w')=2a-\tr_{K/F}(w)$, we have
	\[
		a-\frac{\tr_{K/F}(w)+\sqrt{\Delta}}{2} = a-w = w' = \frac{\tr_{K/F}(w')+\sqrt{\Delta}}{2}=\frac{2a-\tr_{K/F}(w)+\sqrt{\Delta}}{2},
	\]
	which is impossible since this yields $\sqrt{\Delta}=-\sqrt{\Delta}$. Therefore, we have $b=1$, that is, $w'=a+w$, and the condition $\tr_{K/F}(w), \tr_{K/F}(w')\in U_F$ implies that $a=0$, equivalently, $w=w'$.
		
	(4) Note that if we find $m\in\of{}$ such that $m+w_\Delta\in\ok{+}$ which is not $F$-representable, Theorem \ref{thm:equiv-existence-of-univ} implies that there is no $\of{}$-lattice $L$ such that $L\otimes\ok{}$ is universal for $K=F(\sqrt{\Delta})$.
	
	Let $m$ be any element of $\of{}$ such that $m + w_\Delta\in\ok{+}$. Observe that if $m + w_\Delta$ is $F$-representable, then Lemma \ref{lem:rep-by-rk<=d+1} implies that there is a totally positive semidefinite $\of{}$-form $Q(x,y)=px^2+qxy+ry^2$ in two variables such that 
	\[
	m+ w_\Delta = Q(1, w_\Delta) = p + q w_\Delta + r w_\Delta^2.
	\]
	Here the coefficients $p,q,r \in \of{}$ satisfies $p,r, 4pr-q^2 \succeq 0$. Noting that $w_\Delta^2=tw_\Delta-n$ where $t:=\tr_{K/F}(w_\Delta)$ and $n:=\mathrm{N}_{K/F}(w_\Delta)$, we have 
	\[
		p+qw_\Delta+rw_\Delta^2=(p - nr) + (q + tr) w_\Delta
	\]	
	Since $\ok{}=\of{}1+\of{}w_\Delta$, comparing coefficients of both sides together with $n=\frac{t^2-\Delta}{4}$ yields
	\begin{equation}\label{eqn:F-repconditions}
		p + \frac{\Delta - t^2}{4}r = m \quad \text{and} \quad q + tr = 1.
	\end{equation}
	If there is no solution $p,q,r\in\of{}$ of the above equation which satisfies $p,r, 4pr-q^2 \succeq 0$, then we may conclude that $m + w_\Delta$ is not $F$-representable.
	
	Take $m$ as $m_1 := \frac{\Delta - t^2}{4}\in\of{}$. Since $\Delta \succeq t^2$ and $\Delta \succeq (2-t)^2$ from the assumption, we have $m+w_\Delta\in\ok{+}$. Indeed,
	\[
	m_1 + w_\Delta = \frac{\Delta - t^2}{4} + \frac{t+\sqrt{\Delta}}{2} = \frac{\sqrt{\Delta} + t}{2} \left(\frac{\sqrt{\Delta}-t}{2}+1\right),
	\]
	and this number becomes totally positive if $\rho(\Delta) \ge \rho(t)^2$ and $\rho(\Delta) \ge \rho(2-t)^2$ for all embeddings $\rho$ of $F$. Moreover, we have $m_1 \succ 0$. Note that the first equation of \eqref{eqn:F-repconditions} is $p+ m_1r = m_1$. If $p,r\neq 0$, then considering the norm of both sides we have
	\[
		\mathrm{N}_{F/\q}(m_1)^{\frac{1}{d}}=\mathrm{N}_{F/\q}(p+m_1r)^{\frac{1}{d}}\ge \mathrm{N}_{F/\q}(p)^{\frac{1}{d}}+\mathrm{N}_{F/\q}(m_1r)^{\frac{1}{d}}> \mathrm{N}_{F/\q}(m_1r)^{\frac{1}{d}}\ge\mathrm{N}_{F/\q}(m_1)^{\frac{1}{d}},
	\]
	where $d=[F:\q]$, which is a contradiction. Thus either $p=0$ or $r=0$. Then we have $q=0$ since $4pr-q^2 \succeq 0$. On the other hand, as the second equation of \eqref{eqn:F-repconditions} is $q+tr = 1$ we have $r = 1/t \neq 0$. Thus $p=0$ and it follows from $p + m_1 r = m_1$ that $r=1$, hence $t=1$.
	
	Now suppose $t=1$. Then we can write $\Delta = 1+4n'$ where $n'=\frac{\Delta-1}{4}\in\of{+}$. Let $m_2 = n'-1$ and note that
	\[
	m_2 + w_\Delta = n'-1 + \frac{1+\sqrt{1+4n'}}{2}.
	\]
	Since we are assuming $\Delta\succeq 9$, we have $\rho(n') \ge 2$ for all embeddings $\rho$ of $F$. Hence one may easily show that
	\[
	\rho(n')-1 + \frac{1 - \sqrt{1 + 4 \rho(n')}}{2} \ge 0.
	\]
	Therefore we have $m_2 + w_\Delta \in\ok{+}$. 
	Note further that $\mathrm{N}_{F/\q}(n')>\mathrm{N}_{F/\q}(n'-1)$ since $\rho(n')\ge\rho(n'-1)\ge1$ for all embeddings $\rho$ of $F$.
	Taking $m$ as $m_2$ in \eqref{eqn:F-repconditions}, the first equation $p + n' r = n'-1$ implies that $r=0$ since otherwise,
	\[
	\mathrm{N}_{F/\q}(n'-1)^{\frac{1}{d}}=\mathrm{N}_{F/\q}(p+rn')^{\frac{1}{d}}\ge \mathrm{N}_{F/\q}(p)^{\frac{1}{d}}+\mathrm{N}_{F/\q}(rn')^{\frac{1}{d}}\ge \mathrm{N}_{F/\q}(rn')^{\frac{1}{d}}\ge\mathrm{N}_{F/\q}(n')^{\frac{1}{d}},
	\]
	which is a contradiction. Then the second equation in \eqref{eqn:F-repconditions} yields $q=0$, but this is impossible since the condition $4pr-q^2=-1\succeq 0$ does not hold.
\end{proof}

For further use, 
let us prepare a general condition for $F$-representability.

\begin{lem}\label{lem:solve-pqr}
	Let $F$ be a totally real number field. Let $K=F(\sqrt{\Delta})$ be a totally real quadratic extension of $F$ with $\Delta\in\of{+}$ such that $\ok{}=\of{}1+\of{}w_\Delta$ with $w_\Delta=\frac{t+\sqrt{\Delta}}{2}$, where this condition holds for every $F$ of class number $1$ by Proposition $\ref{prop:quad_general_bound}$. For $u,v\in\of{}$, the following are equivalent.

	\begin{enumerate}[label={\rm{(\arabic*)}}]
		\item $\alpha = u+vw_\Delta\in\ok{+}$ is $F$-representable.
		\item There exist $p,q,r\in\of{}$ such that $p,r,4pr-q^2\succeq 0$ and 
	\begin{equation}\label{eq:pqr}
p + q w_{\Delta} + r w_{\Delta}^2 = u + v w_{\Delta}.		    
		\end{equation} 
	\end{enumerate}
 
Moreover, for every $p,q,r$, \eqref{eq:pqr} is equivalent to 
		\[
		p + \frac{\Delta - t^2}{4}r = u \quad \text{and} \quad q + tr = v.
		\]

	Finally, if $(1)$ and $(2)$  hold, then we have
 \[
 2u + vt \succeq 0 \quad \text{and} \quad r \preceq \frac{4u+2vt}{\Delta}.
 \]
\end{lem}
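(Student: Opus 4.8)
The plan is to convert the $F$-representability of $\alpha=u+vw_\Delta$ into a totally positive semidefinite binary quadratic form over $\of{}$, and then to evaluate that form at two cleverly chosen points built from $w_\Delta$ and its $F$-conjugate: one evaluation will isolate the coefficient $r$ (multiplied by $\Delta$), while the other will automatically be totally nonnegative.

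First I would record the data. By the equivalence of (1) and (2) (established in the earlier part of the lemma), there exist $p,q,r\in\of{}$ with $p,r,4pr-q^2\succeq 0$ and $p+qw_\Delta+rw_\Delta^2=u+vw_\Delta$. Put $Q(x,y)=px^2+qxy+ry^2$, so that $Q(1,w_\Delta)=\alpha$. Applying the nontrivial element of $\mathrm{Gal}(K/F)$, which sends $w_\Delta\mapsto \overline{w}_\Delta:=t-w_\Delta=\tfrac{t-\sqrt{\Delta}}{2}$, gives $Q(1,\overline{w}_\Delta)=\overline{\alpha}$, the $F$-conjugate of $\alpha$.

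Next I would apply the polarization identity $Q(v_1)+Q(v_2)=\tfrac12\bigl(Q(v_1+v_2)+Q(v_1-v_2)\bigr)$ to $v_1=(1,w_\Delta)$ and $v_2=(1,\overline{w}_\Delta)$. Since $w_\Delta+\overline{w}_\Delta=t\in\of{}$ and $w_\Delta-\overline{w}_\Delta=\sqrt{\Delta}$, and since $Q(2,t)=4p+2qt+rt^2$ and $Q(0,\sqrt{\Delta})=r\Delta$, this gives
\[
\alpha+\overline{\alpha}=Q(1,w_\Delta)+Q(1,\overline{w}_\Delta)=\tfrac12\bigl(Q(2,t)+r\Delta\bigr).
\]
Because $\alpha+\overline{\alpha}=\tr_{K/F}(\alpha)=2u+vt$, this rearranges to the identity $4u+2vt=Q(2,t)+r\Delta$.

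Finally I would read off both claims from positivity. As $Q$ is totally positive semidefinite and $(2,t)\in\of{2}$, we have $Q(2,t)\succeq 0$; as $r\succeq 0$ and $\Delta\succ 0$, we have $r\Delta\succeq 0$. Hence $2u+vt=\tfrac12\bigl(Q(2,t)+r\Delta\bigr)\succeq 0$, and $r\Delta=(4u+2vt)-Q(2,t)\preceq 4u+2vt$; dividing by the totally positive element $\Delta$ (which preserves the order on $F$) yields $r\preceq\frac{4u+2vt}{\Delta}$, an inequality in $F$. There is no serious obstacle here: the only real choice is the two evaluation points — one wants $v_1-v_2$ proportional to $(0,\sqrt{\Delta})$ so that $Q$ extracts exactly $r\Delta$, and $v_1+v_2$ to lie in $F^2$ so that total semidefiniteness alone forces $Q(v_1+v_2)\succeq 0$; beyond that the argument is purely formal, provided one is careful to apply polarization over $K$ and to invoke semidefiniteness at the level of all real embeddings of $F$ when asserting $Q(2,t)\succeq 0$.
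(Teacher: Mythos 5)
Your argument covers only the ``Finally'' part of the lemma; the equivalence of (1) and (2) and the ``Moreover'' claim are not proved. You invoke ``the equivalence of (1) and (2) (established in the earlier part of the lemma)'', but that earlier part is precisely what the lemma asks you to establish, so as written the citation is circular. Both missing pieces are routine and need to be supplied: the equivalence follows from Lemma \ref{lem:rep-by-rk<=d+1} --- the ``in particular'' statement applied to the two generators $1,w_\Delta$ of $\ok{}$ gives $(1)\Rightarrow(2)$, and conversely a totally positive semidefinite binary form over $\of{}$ representing $\alpha$ at $(1,w_\Delta)$ yields, after splitting off the radical, a totally positive definite $\of{}$-lattice whose extension represents $\alpha$, giving $(2)\Rightarrow(1)$. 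The ``Moreover'' claim is the computation $p+qw_\Delta+rw_\Delta^2=\bigl(p+\tfrac{\Delta-t^2}{4}r\bigr)+(q+tr)w_\Delta$, using $w_\Delta^2=tw_\Delta-\mathrm{N}_{K/F}(w_\Delta)$ and $\mathrm{N}_{K/F}(w_\Delta)=\tfrac{t^2-\Delta}{4}$, followed by comparing coefficients in the free $\of{}$-basis $\{1,w_\Delta\}$.

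For the ``Finally'' part, your polarization argument is correct and genuinely different from the paper's. The paper substitutes $p=u-\tfrac{\Delta-t^2}{4}r$ and $q=v-tr$ into $4pr-q^2\succeq 0$ to obtain $\Delta r^2-2(2u+vt)r+v^2\preceq 0$, solves the resulting quadratic inequality in $\rho(r)$ at each embedding, and then bounds the square root; it obtains $2u+vt\succeq 0$ separately from $\alpha\in\ok{+}$ by summing the two $K/F$-conjugates. Your identity $4u+2vt=Q(2,t)+r\Delta$ delivers both conclusions at once from $Q(2,t)\succeq 0$ and $r\Delta\succeq 0$, which is cleaner for the bound actually asserted here; the paper's route has the side benefit of producing the sharper two-sided bound \eqref{eqn:rbound}, which is reused later in Lemma \ref{lem:extn-by-sqrt5}, so if you adopt the polarization argument you should keep the quadratic-inequality derivation available for that later application.
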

\begin{proof}
	By Lemma \ref{lem:rep-by-rk<=d+1}, $\alpha$ is $F$-representable if and only if there is a positive semidefinite quadratic form $Q=px^2+qxy+ry^2$ such that $Q(1,w_\Delta)=\alpha=u+vw_\Delta$. Noting that 
	\[
	Q(1,w_\Delta)=p+qw_\Delta + rw_\Delta^2 = p+qw_\Delta + r\left(\frac{\Delta - t^2}{4}+tw_\Delta\right),
	\] 
	it is equivalent to that there is a solution $p,q,r\in\of{}$ satisfying $p,r,4pr-q^2\succeq 0$ to the equation
	\begin{equation*}
		p + \frac{\Delta - t^2}{4}r = u \quad \text{and} \quad q + tr = v,
	\end{equation*}
	which proves the equivalence of $(1)$ and $(2)$ and the `Moreover' claim.
	
	To prove the `Finally' part, note that $2\alpha=2u+2vw_\Delta=(2u+vt)+v\sqrt{\Delta}\in\ok{+}$ implies that $\rho(2u+vt)\pm \rho(v)\sqrt{\Delta}\ge 0$ for any embedding $\rho:F\ra \mathbb{R}$. Hence we have $\rho(2u+vt)\ge0$. 
	Now putting $p=u-\frac{\Delta-t^2}{4}r$ and $q=v-tr$ into $4pr-q^2\succeq0$, we have $\Delta r^2-2(2u+vt)r+v^2 \preceq 0$. Thus for any embedding $\rho : F \ra \mathbb{R}$ we have
 \[
 \rho(\Delta r^2-2(2u+vt)r+v^2) =  \rho(\Delta) \rho(r)^2 - 2\rho(2u+vt) \rho(r) + \rho(v)^2 \le 0.
 \]
 As we have $\rho(\Delta) >0$, solving this quadratic inequality in $\rho(r)$ gives
	\begin{equation}\label{eqn:rbound}
		\frac{\rho(2u+vt)-\sqrt{\rho(2u+vt)^2-\rho(\Delta) \rho(v)^2}}{\rho(\Delta)} \le \rho(r) \le \frac{\rho(2u+vt)+\sqrt{\rho(2u+vt)^2-\rho(\Delta) \rho(v)^2}}{\rho(\Delta)}.
	\end{equation}
As $\rho(\Delta)>0$ and $\rho(2u+vt) \ge 0$, we have
\[
\rho(r) \le \frac{\rho(2u+vt)+\sqrt{\rho(2u+vt)^2-\rho(\Delta) \rho(v)^2}}{\rho(\Delta)} \le \frac{\rho(2u+vt)+\sqrt{\rho(2u+vt)^2}}{\rho(\Delta)} = \frac{2\rho(2u+vt)}{\rho(\Delta)}.
\]
Hence we obtain $r \preceq \dfrac{4u+2vt}{\Delta}$, completing the proof of the lemma.
\end{proof}

\section{When $F$ is a real quadratic field of class number $1$}\label{subsec-whenOFisPID}
In this section, we explore the lifting problem when the base field $F$ is a real quadratic field of class number $1$. Let $F = \q(\sqrt{D})$ be a real quadratic field of class number $1$, where $D>0$ is a fundamental discriminant. Throughout this section, let us denote two embeddings of $F$ into $\mathbb{R}$ by $\rho_1$ and $\rho_2$.

Let $K/F$ be a quadratic extension of totally real number fields which admits an $\of{}$-lattice $L$ such that $L\otimes\ok{}$ is universal. We will classify all such fields $K=F(\sqrt{\Delta})$ with $\Delta\in\of{+}$ (see Proposition \ref{prop:quad_general_bound}). In addition to Proposition \ref{prop:quad_general_bound}, we first give another way of bounding $\Delta$ when $F$ is a real quadratic field.

\begin{lem} \label{lem:quadRounding}
	Let $F$ be a real quadratic number field with discriminant $D=D_F$ and with two real embeddings $\rho_1, \rho_2 : F \rightarrow \r$. For any $x, y \in \r$, there exists $\alpha \in \of{}$ such that
	\[
	x \le \rho_1(\alpha) < x + l_F  \quad \text{and} \quad y \le \rho_2(\alpha) < y + l_F
	\]
	where $l_F = \frac{1}{2} \sqrt{D} + 1$.
\end{lem}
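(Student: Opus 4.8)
The plan is to build $\alpha$ by a two-stage rounding procedure: first choose the coefficient of a ring generator so that the \emph{difference} $\rho_1(\alpha)-\rho_2(\alpha)$ is near $x-y$, and then choose the rational-integer coefficient so that both coordinates land in the prescribed intervals simultaneously. Write $\of{}=\z+\z\omega$ where $\omega$ is the generator from \eqref{eqn:def-O_F} (so $\omega=\sqrt d$ or $\omega=\tfrac{1+\sqrt d}{2}$), and record two facts: $\rho_1(1)=\rho_2(1)=1$, and $(\rho_1(\omega)-\rho_2(\omega))^2=\mathrm{Disc}(\z[\omega])=D$. Thus for $\alpha=a+b\omega$ with $a,b\in\z$ we have $\rho_i(\alpha)=a+b\rho_i(\omega)$, and setting $\theta:=\rho_1(\omega)-\rho_2(\omega)$ we get $|\theta|=\sqrt D$ and $\rho_1(\alpha)-\rho_2(\alpha)=b\theta$.

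For the first stage I would take $b\in\z$ to be a nearest integer to $(x-y)/\theta$, which gives $|(x-y)-b\theta|\le\tfrac12|\theta|=\tfrac12\sqrt D$. For the second stage, observe that an integer $a$ satisfies both $x\le\rho_1(a+b\omega)<x+l_F$ and $y\le\rho_2(a+b\omega)<y+l_F$ precisely when $a$ lies in the intersection of the half-open intervals $I_1=[\,x-b\rho_1(\omega),\,x-b\rho_1(\omega)+l_F\,)$ and $I_2=[\,y-b\rho_2(\omega),\,y-b\rho_2(\omega)+l_F\,)$. The left endpoints of $I_1$ and $I_2$ differ by exactly $(x-y)-b\theta$, whose absolute value is at most $\tfrac12\sqrt D<l_F$ by the first stage; hence $I_1\cap I_2$ is a nonempty half-open interval of length $l_F-|(x-y)-b\theta|\ge l_F-\tfrac12\sqrt D=1$, so it contains an integer $a$. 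Then $\alpha=a+b\omega\in\of{}$ has the required property.

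I do not expect a genuine obstacle here; the argument is elementary once one isolates the separate roles of the two coefficients of $\alpha$ (the rational-integer part shifts $\rho_1$ and $\rho_2$ equally, while the $\omega$-part controls their difference). The only delicate point is the final length estimate $l_F-|(x-y)-b\theta|\ge 1$, which is exactly where the value $l_F=\tfrac12\sqrt D+1$ is used, and it is sharp for this approach since $|(x-y)-b\theta|$ can be as large as $\tfrac12\sqrt D$. One should also keep track of the half-open endpoints, but this causes no trouble: any interval $[c,c+L)$ with $L\ge 1$ contains the integer $\lceil c\rceil$.
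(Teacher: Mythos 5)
Your proof is correct and takes essentially the same route as the paper's: both first choose the coefficient of the ring generator (a nearest-integer rounding of $(x-y)/\sqrt{D}$, using $\rho_1(\tau)-\rho_2(\tau)=\sqrt{D}$) to bring the difference within $\tfrac12\sqrt{D}$, and then choose the rational-integer coefficient to place both coordinates simultaneously, the length bound $l_F=\tfrac12\sqrt D+1$ entering in exactly the same way. The only cosmetic difference is that the paper phrases the second stage via sum-and-difference coordinates while you intersect the two target intervals directly.
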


\begin{proof}
Recall that $\of{}=\Z 1 + \Z \tau$ where $\tau \in \of{}$ is defined as in \eqref{eqn:def-O_F}. Pick integers $k$ such that $x-y \in \left(-\frac{\sqrt{D}}{2},\frac{\sqrt{D}}{2} \right] + k \sqrt{D}$, and consider $(x', y') = (x - \rho_1 (k \tau), y - \rho_2 (k \tau)) $. As $\rho_1(\tau) - \rho_2(\tau) = \sqrt{D}$, we have
\[
x'-y' = (x-y) - k(\rho_1 (\tau) - \rho_2(\tau))  = x- y- k \sqrt{D}  \in \left(-\frac{\sqrt{D}}{2},\frac{\sqrt{D}}{2} \right].
\]
Pick an integer $m$ such that $x+y \in \left(-\frac{\sqrt{D}}{2}-2,-\frac{\sqrt{D}}{2} \right] + 2m$, and consider $(x'',y'') = (x - m, y - m)$.
\[
x'' - y'' = x' - y'  \in  \left(-\frac{\sqrt{D}}{2},\frac{\sqrt{D}}{2} \right] \quad \text{and}\quad x'' + y'' = x' + y' - 2m \in \left(-\frac{\sqrt{D}}{2}-2,-\frac{\sqrt{D}}{2} \right].
\]
By adding and subtracting two equations, we obtain
\[
2x'', 2y'' \in (-\sqrt{D}-2, 0] \quad \Rightarrow \quad x'' \le 0 < x'' + l_F \text{ and } y'' \le 0 < y'' + l_F.
\]
Noting that $(x'', y'') = (x' - m, y' - m) = (x - \rho_1(m + k \tau), y - \rho_2(m+ k \tau))$, we have
\[
x \le \rho_1(m + k \tau)  < x + l_F \quad \text{and} \quad y \le \rho_2(m + k \tau) < y + l_F.
\]
Thus $\alpha=m+k\tau\in\of{}$ satisfies the conditions of the lemma.
\end{proof}

\begin{prop}\label{prop:additional-bound-Delta}
	Let $K=F(\sqrt{\Delta})$ be a totally real quadratic extension of a real quadratic field $F$ of class number $1$. Let $\Delta \in \of{+}$ and $w_\Delta  = \frac{t + \sqrt{\Delta}}{2}\in\ok{}$ be constructed as in Proposition $\ref{prop:quad_general_bound}$. If
	\begin{equation}\label{eqn:add-condDelta}
	\rho_1(\Delta) \rho_2(\Delta) \ge (2 \sqrt{\rho_1(\Delta)} + 4 l_F) (2 \sqrt{\rho_2(\Delta)} + 4l_F),
	\end{equation}
	then there exists $m \in \of{}$ such that $m + w_\Delta\in\ok{+}$ and $m + w_\Delta$ is not $F$-representable.
\end{prop}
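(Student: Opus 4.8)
The plan is to exhibit an $m\in\of{}$ with $\alpha:=m+w_\Delta\in\ok{+}$ that fails to be $F$-representable, the role of \eqref{eqn:add-condDelta} being exactly to guarantee that such an $m$ fits inside the window supplied by the rounding Lemma \ref{lem:quadRounding}. First I would record what $F$-representability of $\alpha$ would force. Applying Lemma \ref{lem:solve-pqr} with $u=m$ and $v=1$ yields $p,q,r\in\of{}$ with $p,r,4pr-q^2\succeq 0$, with $q+tr=1$, and, crucially, with $r\succeq 0$ and $r\preceq\frac{4m+2t}{\Delta}$. The key elementary remark is that $r=0$ is already impossible: it would force $q=1$ and hence $4pr-q^2=-1\prec 0$. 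Therefore it suffices to arrange simultaneously that $\alpha\in\ok{+}$ and that $\mathrm{N}_{F/\q}(4m+2t)<\mathrm{N}_{F/\q}(\Delta)$: once $2m+t\succ 0$ and this norm inequality hold, the chain $0\le\mathrm{N}_{F/\q}(r)\le \mathrm{N}_{F/\q}(4m+2t)/\mathrm{N}_{F/\q}(\Delta)<1$ forces $\mathrm{N}_{F/\q}(r)=0$, hence $r=0$, a contradiction.

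To produce such an $m$, I would apply Lemma \ref{lem:quadRounding} with the target point $x_i=\tfrac12\bigl(\sqrt{\rho_i(\Delta)}-\rho_i(t)\bigr)$, obtaining $m\in\of{}$ with
\[
\sqrt{\rho_i(\Delta)}\le\rho_i(2m+t)<\sqrt{\rho_i(\Delta)}+2l_F\qquad(i=1,2).
\]
Write $c_i:=\rho_i(2m+t)$. For total positivity of $\alpha$, one evaluates the four real embeddings of $K$ at $\alpha$, obtaining the values $\tfrac12\bigl(c_i\pm\sqrt{\rho_i(\Delta)}\bigr)$; these are all $\ge 0$ since $c_i\ge\sqrt{\rho_i(\Delta)}$, and the boundary case $c_i=\sqrt{\rho_i(\Delta)}$ is excluded because it would give $\rho_i\bigl((2m+t)^2-\Delta\bigr)=0$, hence $\Delta=(2m+t)^2$ by injectivity of $\rho_i$, contradicting $[K:F]=2$. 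Thus $c_i>\sqrt{\rho_i(\Delta)}>0$ and $\alpha\in\ok{+}$. For the norm bound, since $4m+2t=2(2m+t)$ we have $\mathrm{N}_{F/\q}(4m+2t)=4c_1c_2$, so the upper bounds on the $c_i$ give
\[
\mathrm{N}_{F/\q}(4m+2t)=4c_1c_2<\bigl(2\sqrt{\rho_1(\Delta)}+4l_F\bigr)\bigl(2\sqrt{\rho_2(\Delta)}+4l_F\bigr)\le \rho_1(\Delta)\rho_2(\Delta)=\mathrm{N}_{F/\q}(\Delta),
\]
the last inequality being precisely \eqref{eqn:add-condDelta}. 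Combined with the first paragraph this shows $\alpha$ is not $F$-representable, which proves the proposition.

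The computations here are routine once the setup is chosen; the only real content is picking the right target for the rounding lemma. I expect the genuinely delicate point to be the tension between the two requirements on $m$: total positivity forces each $c_i$ to lie \emph{above} $\sqrt{\rho_i(\Delta)}$, whereas non-representability forces the \emph{product} $c_1c_2$ to lie below $\tfrac14\mathrm{N}_{F/\q}(\Delta)$, and \eqref{eqn:add-condDelta} is exactly the threshold at which a window of width $2l_F$ (the resolution of Lemma \ref{lem:quadRounding}) still fits between these two constraints — this is why the hypothesis is stated multiplicatively rather than coordinatewise. A secondary subtlety, the boundary case $c_i=\sqrt{\rho_i(\Delta)}$, is handled cleanly by the fact that $\Delta$ is not a square in $F$.
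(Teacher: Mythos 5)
Your proof is correct and follows essentially the same route as the paper's: the same target point for the rounding Lemma \ref{lem:quadRounding}, the same bound $0\preceq r\preceq (4m+2t)/\Delta$ from Lemma \ref{lem:solve-pqr}, and the same norm comparison forcing $r=0$ and hence the contradiction $4pr-q^2=-1$. You even handle the boundary case $\rho_i(2m+t)=\sqrt{\rho_i(\Delta)}$ (needed for strict total positivity of $m+w_\Delta$) more carefully than the paper does.
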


\begin{proof}
	By the Lemma \ref{lem:quadRounding}, there exists an $m \in \of{}$ such that for any $i=1,2$
	\begin{equation} \label{eqn:prop-add-bd-Delta}
		\frac{-\rho_i(t) + \sqrt{\rho_i(\Delta)} } {2} \le \rho_i(m) < \frac{-\rho_i(t) + \sqrt{\rho_i(\Delta)} } {2} + l_F.
	\end{equation}
	Noting that all conjugates of $m + w_\Delta$ in $K$ are $\rho_i(m) + \frac{\rho_i(t) \pm \sqrt{\rho_i(\Delta)} } {2}$ with $i=1,2$, we have $m + w_\Delta\in\ok{+}$.
	
	Now we will show that $m + w_\Delta$ is not $F$-representable. Suppose on the contrary that $m+w_{\Delta}$ is $F$-representable. By Lemma \ref{lem:solve-pqr}, there exists $p,q,r \in \of{}$ such that 
 \begin{equation}\label{eqn:Deltar-cond}
    p,r, 4pr - q^2 \succeq 0, \quad m + w_{\Delta} = p + q w_{\Delta} + r w_{\Delta}^2, \quad \text{and} \quad \Delta r \preceq 4m + 2t.
\end{equation}
    If $r=0$, then from $4pr - q^2 =-q^2 \succeq 0$ it follows that $q=0$ and $m + w_{\Delta}=p$, which is contradiction. Thus $r \succ 0$ and $N(r) =\rho_1(r) \rho_2(r) \ge 1$. Applying \eqref{eqn:prop-add-bd-Delta} and \eqref{eqn:Deltar-cond}, we may verify that
 \begin{multline*}
     \rho_1 (\Delta) \rho_2(\Delta)  \le \rho_1(\Delta)\rho_1(r) \rho_2(\Delta) \rho_2(r) \le \rho_1 (4m+2t) \rho_2(4m+2t) \\
     = ( 4 \rho_1(m) + 2 \rho_1(t) ) ( 4 \rho_2(m) + 2 \rho_2(t) ) < (2 \sqrt{\rho_1(\Delta)} + 4 l_F) (2 \sqrt{\rho_2(\Delta)} + 4l_F)
 \end{multline*}
which contradicts \eqref{eqn:add-condDelta}. This proves the proposition.
\end{proof}

Let us now aim towards a computational classification of all real quadratic fields $F$ with class number 1 and discriminant $\leq 200$, $\neq 193,$ for which there is an $\of{}$-lattice with a universal lift to a quadratic extension $K/F$.

\begin{thm}\label{thm:table}
Table $\ref{table1}$ provides the complete list of real quadratic fields $F=\q(\sqrt{D_F})$ and totally real quadratic extensions $K/F$ such that there is an $\of{}$-lattice that is universal over $\ok{}$, among all fundamental discriminants $D_F \le 200, D_F\neq 193$, such that $F$ has class number $1$. For all of them, $D_F\leq 56$.

\begin{table}[h]
\caption{Complete list of $(D_F,K)$ for $D_F\le 200, D_F\neq 193$}
\label{table1}
\begin{tabular}{c|l}
\hline
$D_F$  & $K$ (LMFDB labels \cite{LMFDB})               \\ \hline
$5$  & $4.4.725.1$                          \\ \hline
$8$  & $4.4.1600.1$                         \\ \hline
$12$ & $4.4.2304.1$, $4.4.3600.1$, $4.4.4752.1$ \\ \hline
$17$ & $4.4.4913.1$                         \\ \hline
$21$ & $4.4.11025.1$                        \\ \hline
$24$ & $4.4.2304.1$, $4.4.14400.1$            \\ \hline
$28$ & $4.4.7056.1$, $4.4.19600.1$            \\ \hline
$29$ & $4.4.4205.1$                         \\ \hline
$33$ & $4.4.13068.1$                        \\ \hline
$56$ & $4.4.28224.1$                        \\ \hline
\end{tabular}
\end{table}

\end{thm}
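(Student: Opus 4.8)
## Proof Proposal for Theorem~\ref{thm:table}

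\textbf{Overall strategy.} The plan is to combine the two a priori bounds on $\Delta$ obtained earlier (Proposition~\ref{prop:quad_general_bound}(3) together with Proposition~\ref{prop:additional-bound-Delta}) with an explicit enumeration, field by field, of the surviving candidate extensions $K=F(\sqrt{\Delta})$, and then for each candidate decide membership in the list via the characterization in Theorem~\ref{thm:equiv-existence-of-univ}(3): there is an $\of{}$-lattice universal over $\ok{}$ if and only if every indecomposable of $\ok{+}$ is $F$-representable. The positive direction (a field belongs to Table~\ref{table1}) thus requires producing, for every indecomposable $\alpha\in\ok{+}$, an explicit solution $p,q,r\in\of{}$ with $p,r,4pr-q^2\succeq 0$ to the equations of Lemma~\ref{lem:solve-pqr}; the negative direction (a candidate $K$ is discarded) requires exhibiting a single indecomposable $\alpha=u+vw_\Delta$ that is \emph{not} $F$-representable, i.e.\ for which the linear system of Lemma~\ref{lem:solve-pqr} has no admissible $(p,q,r)$.

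\textbf{Step 1: reduce to finitely many $\Delta$ in each field.} Fix a real quadratic $F=\q(\sqrt{D_F})$ of class number $1$ with $D_F\le 200$, $D_F\ne 193$; there are finitely many such $D_F$ (the $13$ relevant ones), and I would treat them one at a time. For each, Proposition~\ref{prop:quad_general_bound}(1)--(2) puts $K$ in the normal form $K=F(\sqrt{\Delta})$, $\ok{}=\of{}1+\of{}w_\Delta$, $w_\Delta=\tfrac{t+\sqrt\Delta}{2}$, with $\Delta\in\of{+}$ a generator of $\Delta_{K/F}$ unique up to $\of{\times 2}$ and $t\in U_F$. By Proposition~\ref{prop:quad_general_bound}(3), any $\Delta$ satisfying \eqref{eqn:condDelta} is excluded; by Proposition~\ref{prop:additional-bound-Delta}, any $\Delta$ satisfying \eqref{eqn:add-condDelta} is excluded. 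Since $\Delta$ is only defined up to squares of units, I would fix a fundamental unit $\varepsilon_F>1$ and choose, within each $\of{\times 2}$-orbit, the representative with (say) $1\le \rho_1(\Delta)/\rho_2(\Delta)<\varepsilon_F^2$; then $\rho_1(\Delta)\rho_2(\Delta)=\mathrm{N}_{F/\q}(\Delta)$ is bounded by the failure of \eqref{eqn:add-condDelta}, which forces $\rho_1(\Delta)\rho_2(\Delta)=O((\sqrt{\rho_1(\Delta)}+l_F)(\sqrt{\rho_2(\Delta)}+l_F))$, hence $\mathrm{N}_{F/\q}(\Delta)$ bounded and each $\rho_i(\Delta)$ bounded. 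This leaves finitely many $\Delta\in\of{}$ to inspect; the small cases left by \eqref{eqn:condDelta} (namely $\Delta$ with some conjugate small, or $\Delta\equiv 1\pmod{4\of{}}$ with a conjugate $<9$) are handled by direct enumeration. This is the step I expect to be routine in principle but delicate in bookkeeping: one must be careful that the two bounds together genuinely cover all $\Delta$, including the finitely many ``boundary'' $\Delta$ where neither inequality is strict — these are finitely many and get pushed into Step~2 or Step~3.

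\textbf{Step 2: discard the bad candidates.} For each remaining $\Delta$ that is \emph{not} in Table~\ref{table1}, I would exhibit one indecomposable $\alpha\in\ok{+}$ which is not $F$-representable. In practice the natural candidates are $\alpha=m_1+w_\Delta$ and $\alpha=m_2+w_\Delta$ from the proof of Proposition~\ref{prop:quad_general_bound}(3), or small shifts $m+w_\Delta$ supplied by Lemma~\ref{lem:quadRounding} as in Proposition~\ref{prop:additional-bound-Delta} — but when $\Delta$ is small these elements need not be indecomposable, so instead one computes the actual set of indecomposables of $\ok{+}$. This is done by the geometry-of-numbers method of Kala--Tinkov\'a \cite{KT} built on Shintani's unit theorem: the totally positive cone modulo $\ok{\times,+}$ has a Shintani fundamental domain, a finite union of simplicial cones, and the indecomposables all lie (up to units) inside a fixed bounded region of this domain, so they can be enumerated. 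For each indecomposable $\alpha=u+vw_\Delta$ one then runs Lemma~\ref{lem:solve-pqr}: the ``Finally'' part bounds $r$ via $0\preceq r\preceq \tfrac{4u+2vt}{\Delta}$, so $r$ ranges over a finite set in $\of{}$; for each such $r$ set $p=u-\tfrac{\Delta-t^2}{4}r$, $q=v-tr$, and check $p\succeq 0$, $4pr-q^2\succeq 0$. If no $r$ works, $\alpha$ is not $F$-representable and $K$ is discarded. This is exactly the computational content attributed to the Mathematica programs.

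\textbf{Step 3: confirm the good candidates.} For each of the ten rows of Table~\ref{table1} I would carry out the same enumeration of indecomposables of $\ok{+}$ via Shintani domains, and then for \emph{every} indecomposable $\alpha$ produce an explicit admissible triple $(p,q,r)$ solving Lemma~\ref{lem:solve-pqr}. By Theorem~\ref{thm:equiv-existence-of-univ}(3) this shows every element of $\ok{+}$ is $F$-representable, hence — taking $\mathcal S$ a set of square-class representatives of indecomposables, $s$ the Pythagoras number of $\ok{}$, and $L(\sigma)$ the rank-$\le 3$ lattices supplied by Lemma~\ref{lem:rep-by-rk<=d+1} — the $\of{}$-lattice $\perp_{\sigma\in\mathcal S}L(\sigma)^{\perp s}$ has a universal lift to $K$. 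Identifying the resulting $K$ with its LMFDB label \cite{LMFDB} completes the table.

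\textbf{Main obstacle.} The conceptual content is light; the real difficulty, and the reason a computer is used, is the sheer size of the search in Steps~2 and~3 once $D_F$ grows: the number of candidate $\Delta$ from Step~1 grows with $D_F$, and for each the Shintani-domain enumeration of indecomposables and the per-indecomposable loop over $r\in\of{}$ become expensive (this is precisely why $D_F=193$ is excluded and $D_F=177$ already takes hours). A secondary subtlety is ensuring \emph{completeness}: one must verify that the two analytic bounds \eqref{eqn:condDelta} and \eqref{eqn:add-condDelta} leave no $\Delta$ uninspected — in particular the finitely many $\Delta$ on the boundary of these inequalities, and the $\Delta$ with one very small conjugate, must each be individually checked rather than merely assumed to fall under one of the propositions.
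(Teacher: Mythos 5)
Your proposal matches the paper's proof essentially step for step: restrict $\Delta$ to a fundamental domain for the square-unit action and use Propositions \ref{prop:quad_general_bound} and \ref{prop:additional-bound-Delta} to cut down to a finite candidate set, discard candidates by exhibiting non-$F$-representable totally positive elements, and confirm the surviving fields by enumerating indecomposables via signed (Shintani-type) fundamental domains and checking each with Lemma \ref{lem:solve-pqr}, concluding through Theorem \ref{thm:equiv-existence-of-univ}(3). The only minor divergence is in the discarding step, where the paper simply tests elements $m+w_\Delta$ in increasing order of $\tr_{F/\q}(m)$ without verifying indecomposability (Theorem \ref{thm:equiv-existence-of-univ}(2) already suffices), whereas you insist on producing a non-representable \emph{indecomposable} --- harmless but unnecessary extra work.
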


To show the theorem, we proceed as follows: 
Let $F = \q(\sqrt{D})$ be a real quadratic field where $D>0$ is a fundamental discriminant. Let us set $U_F= \left\{0,1,\tau,1+\tau \right\}$, where $\of{} = \z + \z \tau$ with
\[
\tau=\begin{cases}
	\frac{1+\sqrt{D}}{2} & \text{if }D\equiv1\Mod{4},\\
	\frac{\sqrt{D}}{2} & \text{otherwise}.
\end{cases}
\]
Let $\epsilon$ be the square unit in $\{\delta\in\of{\times}^2: \rho_1(\delta)>\rho_2(\delta)\}$ that has the smallest value $\rho_1(\epsilon)$. Let us put $\gamma:=\sqrt{\rho_1(\epsilon)/\rho_2(\epsilon)}=\rho_1(\epsilon)$ and define
\[
\mathcal{F}:=\left\{\alpha \in F^+ : \frac{1}{\gamma} \le \frac{\rho_1(\alpha)}{\rho_2(\alpha)}<\gamma \right\}.
\]
Noting that $\epsilon$ is a generator of $\of{\times}^2$, one may observe that $\mathcal{F}$ is a fundamental domain of $F^+$ with respect to the action of multiplication by squares of units.

Let $K=F(\sqrt{\Delta})$ be a totally real number field containing $F$ such that there is an $\of{}$-lattice $L$ such that $L\otimes\ok{}$ is universal. We first utilize Propositions \ref{prop:quad_general_bound} and \ref{prop:additional-bound-Delta} to obtain a finite set $\mathcal{S}_1$ of elements in $\of{+}$ that $\Delta$ might belong to as follows.

By Proposition \ref{prop:quad_general_bound}, we have $K=F(\sqrt{\Delta})$ for some $\Delta\in\of{+}$. As the choice of $\Delta$ is unique up to multiplication by $\of{\times}^2$, we may assume that $\Delta\in\mathcal{F}$. Moreover, $\Delta$ should not satisfy the conditions in \eqref{eqn:condDelta} by Proposition \ref{prop:quad_general_bound} (4). We may conclude that $\Delta$ belongs to a bounded region $\mathcal{F}\cap \mathcal{B}_{c_{\max}}$ where 
\[
	\mathcal{B}_c:=\{\alpha\in F^+ : \rho_1(\alpha)<c \text{ or } \rho_2(\alpha)<c\}
\] 
for $c\in\mathbb{R}^+$ and 
\[
	c_{\max}:=\max\left(\{\rho_i(u^2) : u\in U_F,\ i\in\{1,2\}\}\cup\{\rho_i((2-u)^2) : u\in U_F,\ i\in\{1,2\} \}\cup\{9\}\right).
\] 
Therefore, we have only finitely many candidates for $\Delta$, and hence only finitely many candidates for $K$. Moreover, note from \eqref{eqn:condw_Delta} that $\Delta\equiv u^2 \Mod{4\of{}}$ for some $u\in U_F$. Furthermore, $\Delta$ should not satisfy the condition in \eqref{eqn:add-condDelta} by Proposition \ref{prop:additional-bound-Delta}. 
It turns out that, \eqref{eqn:add-condDelta} rules out significant number of elements in $\of{+}\cap \mathcal{F}\cap\mathcal{B}_{c_{\max}}$, especially when $\gamma$ gets too large. We use a computer program to obtain the finite set $\mathcal{S}_1$ of all elements $\alpha\in\of{+}$ contained in $\mathcal{F}\cap \mathcal{B}_{c_{\max}}$ such that $\alpha\equiv u^2 \Mod{4\of{}}$ for some $u\in U_F$ and not satisfying the condition \eqref{eqn:add-condDelta} with $\Delta=\alpha$. 

Let $\Delta\in\mathcal{S}_1$. We may assign a unique $w_\Delta=\frac{t+\sqrt{\Delta}}{2}\in\ok{}$ satisfying \eqref{eqn:condw_Delta} since $u^2$ are distinct modulo $4\of{}$ for all $u\in U_F$. Let us consider the following test to check for $m\in\of{}$ such that $\alpha_m:=m+w_\Delta\succeq0$, whether $\alpha_m$ is $F$-representable or not. Note that by Lemma \ref{lem:solve-pqr}, $\alpha_m$ is $F$-representable if and only if there is a solution $p,q,r\in\of{}$ satisfying $p,r,4pr-q^2\succeq 0$ to the equation
\begin{equation*}
	p + \frac{\Delta - t^2}{4}r = m \quad \text{and} \quad q + tr = 1.
\end{equation*}
Moreover, Lemma \ref{lem:solve-pqr} give us the following bound on $r$: 
\[
0\preceq r\preceq  \frac{4m+2t}{\Delta}
\]
We may search for every such $r\in\of{}$ and check whether $p=m-\frac{\Delta - t^2}{4}r\succeq0$ and $4pr-q^2\succeq0$. If this holds for some $r\in\of{}$ in the bound, then $\alpha_m$ is $F$-representable; otherwise, $\alpha_m$ is not $F$-representable.

For each $\Delta\in\mathcal{S}_1$, we go through this test for several $m\in\of{}$ such that $m\succeq -w_\Delta$, in the increasing order of $\tr_{F/\q}(m)$. If $\alpha_m=m+w_\Delta$ is not $F$-representable for one of those $m\in\of{}$, then $K=F(\sqrt{\Delta})$ does not have an $\of{}$-lattice such that $L\otimes\ok{}$ is universal. We collect those $\Delta\in\mathcal{S}_1$ such that all $\alpha_m$ are $F$-representable as a set $\mathcal{S}_2$. We note that a set $\mathcal{S}_2$ may be different depending on how many $m$ we test on.

Now for each $\Delta\in\mathcal{S}_2$, we try to prove that $K=F(\sqrt{\Delta})$ has an $\of{}$-lattice $L$ such that $L\otimes K$ is universal. By Theorem \ref{thm:equiv-existence-of-univ}, it suffice to show that every indecomposable element in $\ok{+}$ is $F$-representable. Furthermore, note that $\alpha$ is $F$-representable if and only if so are $\epsilon\alpha$ and $\zeta^2 \alpha$ for any unit $\epsilon\in\of{\times,+}$ and $\zeta \in \ok{\times}$. Hence it is enough to show that every representative of indecomposable element up to multiplication by $\of{\times,+} \ok{\times}^2$ is $F$-representable.

We seek to find a set of representatives of indecomposable elements up to multiplication by $\of{\times,+}\ok{\times}^2$. As $K$ is a totally real quartic field, it can't have cyclotomic units except $1$ and $-1$, so by Dirichlet's unit theorem $\ok{\times}$ is isomorphic to $\Z^3 \times \{-1,1\}$. It follows easily that $\ok{\times}^2 \simeq \Z^3$. Meanwhile, $\ok{\times,+}$ contains $\ok{\times}^2$ so $\ok{\times,+}$ is of rank $3$, and it does not contain any torsion. Thus $\ok{\times,+} \simeq \Z^3$. Since
\[
\ok{\times}^2 \subseteq \of{\times,+}\ok{\times}^2 \subseteq \ok{\times,+}
\]
and they are groups, we have $\of{\times,+}\ok{\times}^2 \simeq \Z^3$. Let $u_1, u_2, u_3$ be a basis of $\of{\times,+}\ok{\times}^2$.

Let $\alpha$ be an indecomposable element in $\ok{+}$. By multiplying by an element in $\of{\times,+}\ok{\times}^2$ if necessary, we may assume that $\alpha$ is contained in the cone generated by elements $\mathfrak{U}=\{1,u_1,u_2,u_3,u_1u_2,$ $u_1u_3, u_2u_3,u_1u_2u_3\}$ (see \cite[Theorem 1]{DF}). Noting that $\alpha-u\notin\ok{+}$ for any $u \in \mathfrak{U}$ since $\alpha$ is indecomposable, $\alpha$ is contained in the $4$-dimensional cube $\mathcal{C}:=\left\{\sum_{i=1}^4 a_iv_i : a_i\in[0,1]\right\}$ generated by $4$ elements $v_1,v_2,v_3,v_4$ in $\mathfrak{U}$. Writing $\alpha=\sum_{i=1}^4 a_iv_i$ for some $a_i\in[0,1]$, we have
\[
	\tr_{K/\q}(\alpha)=\sum_{i=1}^4 a_i\tr_{K/\q}(v_i)\le \sum_{i=1}^4 \tr_{K/\q}(v_i)\le M(\mathfrak{U}):=\max \left\{\sum_{i=1}^4 \tr_{K/\q}(v_i) : v_i \in \mathfrak{U}\right\}.
\]
Therefore, we may list all elements $\alpha\in\ok{+}$ with $\tr_{K/\q}(\alpha)\le M(\mathfrak{U})$ and then find the set $S_{\text{indec}}$ of indecomposable elements among this list. Notice that the indecomposable elements in $S_{\text{indec}}$ found so far may coincide up to a unit multiplication. For each $\alpha=u+vw_\Delta\in S_{\text{indec}}$ with $u,v\in \of{}$, we make use of Lemma \ref{lem:solve-pqr} to check if $\alpha$ is $F$-representable, namely, we check if there are $p,q,r \in \of{}$ such that $p,r,4pr-q^2\succeq 0$ and satisfying
\[
	p + \frac{\Delta - t^2}{4}r = u \quad \text{and} \quad q + tr = v.
\]
This can be done by trying all $r\in\of{}$ such that $0\preceq r \preceq (4u+2vt)/\Delta$. If every $\alpha\in S_{\text{indec}}$ is $F$-representable, then we may conclude that there is an $\of{}$-lattice $L$ such that $L\otimes \ok{}$ is universal; otherwise, there is not.

\begin{rmk}\label{rmk:code}
    We carried out all the computations for proving Theorem \ref{thm:table} in Mathematica. We covered each value of $D_F$ separately using a series of small programs (available upon request) together with data from LMFDB \cite{LMFDB}. The run-time significantly depends on the size of the region $\mathcal B_c$ that, in turn, depends on the size $\gamma$ of the fundamental unit. 
    
    Thus it was fast to run for small values $D_F<125$ (less than 1 minute each), it took approx. 7 hours for $D_F=177$, and did not complete within 1 day for the excluded case $D_F=193$. In this case, one would have to check elements with traces up to $8.5$ billion as candidates for the discriminant; in the runtime of 1 day we got to trace $0.3$ billion. While it should be possible to finish the check for $193$ and some values $D_F>200$, it would be difficult to reach significantly larger $D_F$, at least without non-trivial improvements in the algorithm and its implementation.
\end{rmk}

Let us further illustrate the algorithm on three explicit examples.

\begin{exam} Let us consider the case when $D=5$, that is, $F=\q(\sqrt{5})$. Note that $\of{} = \z + \z \tau$ with $\tau=\frac{1+\sqrt{5}}{2}$. Moreover, we have $\epsilon=\left(\frac{1+\sqrt{5}}{2}\right)^2=\frac{3+\sqrt{5}}{2}$, hence $\gamma=\frac{1+\sqrt{5}}{2}$. Proceeding the above steps, we have
\[
	\mathcal{S}_2=\left\{5,\frac{11+\sqrt{5}}{2},\frac{11-\sqrt{5}}{2}\right\}.
\]

If $\Delta=5$, then $K=F(\sqrt{5})=F$, hence this is not the case we are interested in.
If $\Delta=\frac{11\pm\sqrt{5}}{2}$, then $\Delta_{K/\q}=\Delta_{F/\q}^2\cdot  N_{F/\q}(\Delta)=5^2\cdot 29=725$. Searching in LMFDB, there are exactly one totally real quartic field containing $F=\q\left(\sqrt{5}\right)$ with $\Delta_{K/\q}=725$, which is labelled 4.4.725.1. For this field $K$, we could check that every indecomposable element in $\ok{+}$ is $F$-representable.
\end{exam}

\begin{exam}
Let us consider the case when $D=8$, that is, when $F=\q(\sqrt{2})$. In this case, $\mathcal{S}_2=\{5\}$ and for $\Delta=5$, $K=F(\sqrt{\Delta})=\q(\sqrt{2},\sqrt{5})$. For this field $K$, we could check that every indecomposable element in $\ok{+}$ is $F$-representable. Thus, for this $K$, there is an $\of{}$-lattice that is universal over $\ok{}$.
\end{exam}

\begin{exam}
Let us consider the case when $D=12$, that is, when $F=\q(\sqrt{3})$. 
In this case, a totally real quartic field $K$ containing $F$ that has an $\of{}$-lattice $L$ such that $L\otimes K$ is universal if and only if $K$ is isomorphic to 
\[
	\q(\sqrt{2},\sqrt{3}), \ \q(\sqrt{3},\sqrt{5}) \text{ and } \q\left(\sqrt{3},\sqrt{9 + 4 \sqrt{3}}\right).
\]
Note that they correspond to LMFDB labels 4.4.2304.1, 4.4.3600.1 and 4.4.4752.1.
\end{exam}

\begin{conj}\label{conj:real quad lifts}
	Let $F=\Q(\sqrt{D})$ be a real quadratic field where $D$ is the discriminant of $F$. If $D>56$ and $F$ has class number $1$, then there is no totally real quartic field $K$ containing $F$ which has an $\of{}$-lattice that is universal over $\ok{}$.
\end{conj}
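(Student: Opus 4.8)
The plan is to argue by contradiction. Suppose $F=\q(\sqrt D)$ has class number $1$, $D>56$, and $K=F(\sqrt\Delta)$ is a totally real quartic admitting an $\of{}$-lattice universal over $\ok{}$; by Theorem \ref{thm:equiv-existence-of-univ} every indecomposable, indeed every totally positive, element of $\ok{}$ is then $F$-representable. Normalizing $\Delta\in\mathcal F$, the argument following Theorem \ref{thm:table} shows $\Delta$ must lie in the explicit region $\mathcal F\cap\mathcal B_{c_{\max}}$, satisfy $\Delta\equiv u^2\Mod{4\of{}}$ for some $u\in U_F$, fail \eqref{eqn:condDelta}, and fail \eqref{eqn:add-condDelta}. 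Write $t:=\tr_{K/F}(w_\Delta)\in U_F$. The case $t=0$ is immediately out: the integer $w_\Delta=\tfrac{\sqrt\Delta}{2}\in\ok{+}$ is not $F$-representable, since in \eqref{eq:pqr} the condition $p,r\succeq0$ with $p=-\tfrac{\Delta}{4}r$ forces $p=r=0$, whence $4pr-q^2=-q^2=-1\not\succeq0$. For $t=1$ one reuses Proposition \ref{prop:quad_general_bound}(4) wherever the last clause of \eqref{eqn:condDelta} applies, leaving only a short finite check. So the substance of the conjecture is the region with $t\in\{\tau,1+\tau\}$ inside $(\mathcal F\cap\mathcal B_{c_{\max}})$ minus the sub-region cut out by \eqref{eqn:add-condDelta}.

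For this the engine is Lemma \ref{lem:solve-pqr}: $\alpha=u+vw_\Delta\in\ok{+}$ with $v\neq0$ is $F$-representable iff some $r\in\of{}$ satisfies $r\succ0$, $p=u-\tfrac{\Delta-t^2}{4}r\succeq0$, and $\Delta r^2-2(2u+vt)r+v^2\preceq0$; the last condition traps $\rho_i(r)$ in an interval of half-width $\tfrac{\sqrt{\rho_i(2u+vt)^2-\rho_i(\Delta)\rho_i(v)^2}}{\rho_i(\Delta)}$ about $\tfrac{\rho_i(2u+vt)}{\rho_i(\Delta)}$, and taking norms gives the coarse obstruction $N_{F/\q}(\Delta)\le 4\,N_{F/\q}(\tr_{K/F}\alpha)$. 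One route is thus: for each surviving $\Delta$, exhibit a totally positive $\alpha\in\ok{}\setminus\of{}$ with $4\,N_{F/\q}(\tr_{K/F}\alpha)<N_{F/\q}(\Delta)$. Proposition \ref{prop:additional-bound-Delta} is exactly this, applied to the single element $m+w_\Delta$ with $m$ produced by Lemma \ref{lem:quadRounding}; indeed \eqref{eqn:add-condDelta} is precisely the inequality that guarantees $N_{F/\q}(\tr_{K/F}(m+w_\Delta))<N_{F/\q}(\Delta)/4$ for that $m$. To push past \eqref{eqn:add-condDelta} one must be sharper: vary $\alpha$ (allowing $|v|\ge2$) and, instead of only taking norms, arrange that the two forced $r$-intervals, once intersected with $\rho_i(r)\le\tfrac{4\rho_i(u)}{\rho_i(\Delta)-\rho_i(t)^2}$, contain no common point of the lattice $\of{}\subset\R^2$. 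For the residual sliver where even this fails because $N_{F/\q}(\Delta)$ stays bounded (a totally positive $\alpha=u+vw_\Delta$ necessarily has $N_{F/\q}(\tr_{K/F}\alpha)\ge|N_{F/\q}(v)|\sqrt{N_{F/\q}(\Delta)}$), the plan is to invoke the explicit list of indecomposables of $K$ from Shintani's unit theorem in the style of Kala--Tinkov\'a \cite{KT}, together with a description of $\of{\times,+}\ok{\times}^2$, and show one such indecomposable violates Lemma \ref{lem:solve-pqr}.

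The main obstacle is uniformity in $D$. The candidate region $\mathcal F\cap\mathcal B_{c_{\max}}$ contains on the order of $D^{3/2}$ elements of $\of{}$ once the fundamental unit of $F$ is large, and the $\Delta$ that survive \eqref{eqn:add-condDelta} are precisely the ``thin'' ones, for which $N_{F/\q}(\Delta)$ can stay small while one conjugate grows -- exactly the cases where, as Remark \ref{rmk:code} reports, the finite check explodes (e.g.\ $D_F=193$). Ruling them out appears to require either a genuinely new lower bound forcing $N_{F/\q}(\Delta)$ to be large whenever $t\notin\{0,1\}$ and \eqref{eqn:add-condDelta} fails, or uniform control -- in $D$ and in $\Delta$ -- of the indecomposables of $K=F(\sqrt\Delta)$ and of its unit group. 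Without such an ingredient, the evidence for the conjecture remains the finite verification of Theorem \ref{thm:table} and the biquadratic case settled by Theorem \ref{thm:intro3}, which is why it is stated only as a conjecture; a full proof would, as noted above, call for substantial new ideas.
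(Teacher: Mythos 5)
This statement is Conjecture~\ref{conj:real quad lifts}; the paper does not prove it. The only evidence offered there is the finite computational verification of Theorem~\ref{thm:table} (fundamental discriminants $D_F\le 200$, $D_F\neq 193$, class number $1$) together with the biquadratic case handled by Theorem~\ref{thm:intro3}, and Remark~\ref{rmk:code} explicitly records that the method already fails to terminate in reasonable time at $D_F=193$. Your proposal is therefore not being measured against a proof in the paper, and you correctly recognize this: your closing paragraph is an accurate diagnosis of why the statement is open. In particular, your identification of the obstruction --- the surviving $\Delta$ are exactly the ``thin'' ones where one conjugate is small, $N_{F/\q}(\Delta)$ stays bounded while the fundamental unit (hence the region $\mathcal F\cap\mathcal B_{c_{\max}}$) grows, so that neither Proposition~\ref{prop:additional-bound-Delta} nor any norm-based variant of Lemma~\ref{lem:solve-pqr} can be made uniform in $D$ --- matches the paper's own account of where the computation explodes. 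Your coarse inequalities $N_{F/\q}(\Delta)\le 4\,N_{F/\q}(\tr_{K/F}\alpha)$ and $N_{F/\q}(\tr_{K/F}\alpha)\ge|N_{F/\q}(v)|\sqrt{N_{F/\q}(\Delta)}$ are both correct and usefully isolate the bounded-norm sliver.

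One concrete error in the sketch: the claim that the case $t=0$ is ``immediately out'' because $w_\Delta=\tfrac{\sqrt\Delta}{2}\in\ok{+}$ is false. The conjugates of $\tfrac{\sqrt\Delta}{2}$ over $F$ are $\pm\tfrac{\sqrt{\rho_i(\Delta)}}{2}$, so $w_\Delta$ itself is never totally positive; one must pass to $m+w_\Delta$ for a suitable $m\in\of{}$ exactly as in Proposition~\ref{prop:quad_general_bound}(3), and that argument only applies when $\Delta\succeq 4$ (the condition $\Delta\succeq(2-t)^2$ with $t=0$), which is precisely what can fail for the thin discriminants. So even the $t=0$ branch is not disposed of uniformly, and the hard region you describe includes it. With that correction, your assessment stands: no proof is given here or in the paper, and a genuine proof would require new ideas of the kind you indicate (a uniform lower bound on $N_{F/\q}(\Delta)$ for surviving candidates, or uniform control of the indecomposables and unit groups of $K=F(\sqrt\Delta)$ across all $D$).
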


\section{Extension by fixed $\sqrt{e}$}\label{sec:5}

\begin{lem}\label{lem:field-compositum}
Let $F_1, F_2$ be fields of degree $n_1, n_2$ respectively such that $\mathrm{gcd}(\mathrm{Disc}(F_1), \mathrm{Disc}(F_2))= 1$, and suppose that $F_1 F_2$ has degree $n_1 n_2$. Then its discriminant is $\mathrm{Disc}(F_1)^{n_2} \mathrm{Disc}(F_2)^{n_1}$, and $\oo_{F_1 F_2} = \oo_{F_1} \oo_{F_2}$.
\end{lem}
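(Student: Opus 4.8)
The plan is to prove the ring equality $\oo_{F_1F_2} = \oo_{F_1}\oo_{F_2}$ first, and then deduce the discriminant formula as a formal consequence. Write $R = \oo_{F_1}\oo_{F_2}$ for the (order) subring of $\oo_{F_1F_2}$ generated by the two rings of integers; note $FL = V$-type density arguments are not needed here since everything sits inside the number field $F_1F_2$. The key input is that $\mathrm{Disc}(F_1)$ and $\mathrm{Disc}(F_2)$ are coprime. I would first record the elementary fact that if $\{a_i\}$ is a $\Z$-basis of $\oo_{F_1}$ and $\{b_j\}$ is a $\Z$-basis of $\oo_{F_2}$, then $\{a_ib_j\}$ spans $R$ as a $\Z$-module and, because $[F_1F_2:\Q] = n_1n_2$ equals the number of products $a_ib_j$, this family is in fact a $\Z$-basis of $R$. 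Consequently $\mathrm{disc}(R) = \mathrm{disc}(\oo_{F_1})^{n_2}\,\mathrm{disc}(\oo_{F_2})^{n_1} = \mathrm{Disc}(F_1)^{n_2}\mathrm{Disc}(F_2)^{n_1}$; this is the standard determinant identity for the discriminant of a tensor-product basis (the trace form on $F_1F_2$ restricted to such a basis has Gram matrix the Kronecker product of the two Gram matrices, whose determinant is the product of the appropriate powers).

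Next I would use the index relation $[\oo_{F_1F_2}:R]^2 \cdot \mathrm{disc}(\oo_{F_1F_2}) = \mathrm{disc}(R)$. To conclude $R = \oo_{F_1F_2}$ it therefore suffices to show $[\oo_{F_1F_2}:R] = 1$, equivalently that $[\oo_{F_1F_2}:R]^2$ divides no prime — i.e., that for every prime $p$, $p \nmid [\oo_{F_1F_2}:R]$. Fix a prime $p$; since $\gcd(\mathrm{Disc}(F_1),\mathrm{Disc}(F_2)) = 1$, at least one of the two discriminants, say $\mathrm{Disc}(F_2)$, is a $p$-adic unit. The cleanest way to finish is to localize at $p$: it is enough to prove $R\otimes\Z_p = \oo_{F_1F_2}\otimes\Z_p$, i.e. that $R_p := \oo_{F_1}\otimes\Z_p$ together with $\oo_{F_2}\otimes\Z_p$ generate the maximal order of $F_1F_2\otimes\Q_p$. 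Because $p\nmid\mathrm{Disc}(F_2)$, the $\Z_p$-algebra $\oo_{F_2}\otimes\Z_p$ is étale (a product of unramified extensions of $\Q_p$), hence its relative different over $\Z_p$ is trivial; by base change the relative different of $\oo_{F_1F_2}\otimes\Z_p$ over $\oo_{F_1}\otimes\Z_p$ — which measures exactly how far $R_p$ is from $\oo_{F_1F_2}\otimes\Z_p$ — is likewise trivial, forcing $R_p = \oo_{F_1F_2}\otimes\Z_p$. Running this for every $p$ gives $R = \oo_{F_1F_2}$, and then the discriminant formula follows from the displayed computation of $\mathrm{disc}(R)$.

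The main obstacle is the local step: one must argue carefully that coprimality of the \emph{global} discriminants lets one peel off, prime by prime, the contribution of whichever field is unramified at $p$, and that $\oo_{F_1}\otimes\Z_p$ and $\oo_{F_2}\otimes\Z_p$ really do \emph{together} generate the full maximal order rather than merely a finite-index suborder. I would present this either via the conductor–different description above, or, if one prefers a more hands-on route, by the valuation-theoretic observation that a prime of $F_1F_2$ above $p$ has ramification index over $\Q$ equal to the ramification index of the corresponding prime of $F_1$ over $\Q$ (since $F_2/\Q$ is unramified at $p$), together with the fact that $\oo_{F_1}$ already realizes that ramification; one then checks that $R_p$ is maximal by comparing discriminants locally, using that $\mathrm{disc}(R_p) = \mathrm{disc}(\oo_{F_1}\otimes\Z_p)^{n_2}$ (the $\oo_{F_2}$ factor contributing a unit) has the same $p$-adic valuation as $\mathrm{disc}(\oo_{F_1F_2}\otimes\Z_p)$. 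Either way, the coprimality hypothesis and the degree hypothesis $[F_1F_2:\Q] = n_1n_2$ are exactly what make the argument go through, and I would flag that the degree hypothesis is what guarantees the products $a_ib_j$ are linearly independent in the first place.
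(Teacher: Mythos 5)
Your proposal is correct in outline and shares the paper's key first step: computing $\mathrm{disc}(\oo_{F_1}\oo_{F_2}) = \mathrm{Disc}(F_1)^{n_2}\mathrm{Disc}(F_2)^{n_1}$ via the tensor-product basis $\{a_ib_j\}$ (whose linear independence is exactly where the degree hypothesis enters, as you rightly flag). Where you diverge is in showing the index $[\oo_{F_1F_2}:\oo_{F_1}\oo_{F_2}]$ is $1$. The paper stays global: it applies the tower formula $\mathrm{Disc}(F_1F_2) = \mathrm{Disc}(F_1)^{n_2}N(\Delta_{F_1F_2/F_1})$ to get $\mathrm{Disc}(F_i)^{n_j}\mid\mathrm{Disc}(F_1F_2)$ for both $i$, invokes coprimality to conclude $\mathrm{Disc}(F_1)^{n_2}\mathrm{Disc}(F_2)^{n_1}\mid\mathrm{Disc}(F_1F_2)$, and then squeezes equality against the divisibility in the other direction coming from $\oo_{F_1}\oo_{F_2}\subseteq\oo_{F_1F_2}$. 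Your prime-by-prime localization is the same comparison done one $p$ at a time; your second, ``hands-on'' variant (comparing $v_p$ of the two discriminants, with the unramified factor contributing a unit) is essentially the paper's argument localized and is sound. Your primary local route is the one place I would push back: the assertion that the relative different of $\oo_{F_1F_2}\otimes\Z_p$ over $\oo_{F_1}\otimes\Z_p$ ``measures exactly how far $R_p$ is from $\oo_{F_1F_2}\otimes\Z_p$'' is not accurate as stated --- the different measures ramification, not the index of the subring generated by $\oo_{F_2}\otimes\Z_p$. What you actually need there is that finite étale base change preserves normality (so $\oo_{F_1,p}\otimes_{\Z_p}\oo_{F_2,p}$ is already integrally closed when $p\nmid\mathrm{Disc}(F_2)$), or else the conductor rather than the different. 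Since your discriminant-valuation fallback closes this gap cleanly, the proof as a whole stands; the global tower-formula version in the paper is simply shorter because it avoids localization altogether.
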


\begin{proof}
Note that we have $\mathrm{Disc}(\oo_{F_1} \oo_{F_2}) = \mathrm{Disc}(F_1)^{n_2} \mathrm{Disc}(F_2)^{n_1}$ and $\oo_{F_1} \oo_{F_2} \subseteq \oo_{F_1 F_2}$. Hence $\mathrm{Disc}(F_1 F_2)$ divides $\mathrm{Disc}(F_1)^{n_2} \mathrm{Disc}(F_2)^{n_1}$. 
Meanwhile, the relative discriminant formula 
\[
\mathrm{Disc}(F_1 F_2) = \mathrm{Disc}(F_1)^{n_2} N(\Delta_{F_1 F_2/F_1})
\]
implies $\mathrm{Disc}(F_1)^{n_2} \vert \mathrm{Disc}(F_1 F_2)$ and similarly $\mathrm{Disc}(F_2)^{n_1} \vert \mathrm{Disc}(F_1 F_2)$. As $\mathrm{gcd}(\mathrm{Disc}(F_1), \mathrm{Disc}(F_2)) =1$, we have $\mathrm{Disc}(F_1)^{n_2} \mathrm{Disc}(F_2)^{n_1} \vert \mathrm{Disc}(F_1 F_2)$. So it follows that $\mathrm{Disc}(F_1 F_2) = \mathrm{Disc}(F_1)^{n_2} \mathrm{Disc}(F_2)^{n_1} = \mathrm{Disc}(\oo_{F_1} \oo_{F_2})$. Thus the inclusion $\oo_{F_1} \oo_{F_2} \subseteq \oo_{F_1 F_2}$ should be equality.
\end{proof}

\begin{lem}\label{lem:extn-by-sqrt(d)}
Let $e$ be a square-free integer and let $F$ be a field such that $\mathrm{gcd}\left(\mathrm{Disc}(F), D_{\q(\sqrt{e})}\right)=1$. Then the field $K=F(\sqrt{e})$ is a quadratic extension of $F$ and $\oo_{K} = \oo_{F} 1 + \oo_{F} \omega_e$ where $\omega_e = \frac{1+\sqrt{e}}{2}$ if $e \equiv 1 \Mod 4$ and $\omega_e =\sqrt{e}$ otherwise.
\end{lem}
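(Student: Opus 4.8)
The plan is to deduce the statement directly from Lemma \ref{lem:field-compositum} applied with $F_1 = F$ and $F_2 = \q(\sqrt e)$. First I would record the basic setup: since $e$ is square-free and (as one checks from the hypothesis $\gcd(\mathrm{Disc}(F), D_{\q(\sqrt e)})=1$, which forces $D_{\q(\sqrt e)}>1$ and hence $e\neq 1$) the field $\q(\sqrt e)$ is a genuine quadratic field, we have $\oo_{\q(\sqrt e)} = \z 1 + \z\omega_e$ with $\omega_e$ as in the statement — this is the classical description \eqref{eqn:def-O_F} specialized to $\q$. Next I would argue that $K = F(\sqrt e)$ is a quadratic extension of $F$, i.e.\ that $\sqrt e\notin F$: if it were, then $\q(\sqrt e)\subseteq F$, so $D_{\q(\sqrt e)} = \mathrm{Disc}(\q(\sqrt e))$ would divide $\mathrm{Disc}(F)$ (the discriminant of a subfield divides that of the field, by the tower/conductor-discriminant relation), contradicting $\gcd(\mathrm{Disc}(F), D_{\q(\sqrt e)})=1$ together with $D_{\q(\sqrt e)}>1$. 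Hence $[K:F]=2$ and therefore $[K:\q] = 2[F:\q] = n_1 n_2$ where $n_1 = [F:\q]$, $n_2 = 2$, so the degree hypothesis of Lemma \ref{lem:field-compositum} is satisfied, as is the coprimality of discriminants by assumption.

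Then I would invoke Lemma \ref{lem:field-compositum} to conclude $\oo_K = \oo_{F_1 F_2} = \oo_F\,\oo_{\q(\sqrt e)}$. It remains to turn the ring product $\oo_F\,\oo_{\q(\sqrt e)}$ into the stated $\oo_F$-module description. Writing $\oo_{\q(\sqrt e)} = \z 1 + \z\omega_e$ and multiplying by $\oo_F$, every element of $\oo_F\,\oo_{\q(\sqrt e)}$ is an $\oo_F$-linear combination of products $a\cdot 1$ and $a\cdot\omega_e$ with $a\in\oo_F$ (sums of such products lie in $\oo_F 1 + \oo_F\omega_e$ since $\oo_F$ is a ring), so $\oo_F\,\oo_{\q(\sqrt e)} = \oo_F 1 + \oo_F\omega_e$, giving $\oo_K = \oo_F 1 + \oo_F\omega_e$ exactly as claimed.

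I do not expect any serious obstacle here; the lemma is essentially a repackaging of Lemma \ref{lem:field-compositum} for the specific case of adjoining a square root. The only points that require a line of care are (i) justifying that $\q(\sqrt e)$ is actually a quadratic field and that $\sqrt e\notin F$ — both of which follow cleanly from $D_{\q(\sqrt e)}>1$ and the coprimality hypothesis, using that the discriminant of a subfield divides the discriminant of the field — and (ii) checking the degree equality $[K:\q]=n_1 n_2$ needed to apply Lemma \ref{lem:field-compositum}, which is immediate once $[K:F]=2$ is known. The passage from the ring-theoretic compositum $\oo_F\oo_{\q(\sqrt e)}$ to the free $\oo_F$-module $\oo_F 1 + \oo_F\omega_e$ is routine.
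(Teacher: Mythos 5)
Your argument is essentially identical to the paper's: both establish $[K:F]=2$ by noting that $\sqrt e\in F$ would force $\q(\sqrt e)\subseteq F$ and hence $D_{\q(\sqrt e)}\mid \mathrm{Disc}(F)$, contradicting coprimality, and both then invoke Lemma \ref{lem:field-compositum} to get $\oo_K=\oo_F\,\oo_{\q(\sqrt e)}=\oo_F 1+\oo_F\omega_e$; you merely spell out the degree hypothesis and the module identification in more detail. One tiny inaccuracy: the hypothesis $\gcd(\mathrm{Disc}(F),D_{\q(\sqrt e)})=1$ does \emph{not} by itself force $D_{\q(\sqrt e)}>1$ (a gcd with $1$ is always $1$), so the exclusion of $e=1$ is really an implicit assumption of the statement rather than a consequence of the hypothesis — a point the paper glosses over as well.
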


\begin{proof}
If $K/F$ is not quadratic, then $\sqrt{e} \in F$, so $\q(\sqrt{e}) \subset F$. Noting that $\mathrm{Disc}(\q(\sqrt{e}))=\Delta$, we may observe that $\mathrm{gcd}(\mathrm{Disc}(F), \Delta)=1$ cannot happen. Thus by Lemma \ref{lem:field-compositum} 
we have $\oo_{K} = \oo_{F} 1 + \oo_{F} \omega_e$.
\end{proof}

\begin{thm}\label{prop:lifting-fail-sqrt(d)-extn}
Let $F$ be a totally real field, $e>0$ a square-free integer such that $\mathrm{gcd}(\mathrm{Disc}(F), \Delta)=1$, where $\Delta=D_{\q(\sqrt{e})}$; then  $K=F(\sqrt{e})=F(\sqrt{\Delta})$ is a totally real quadratic extension of $F$. If $e\neq 5$, then there is no $\of{}$-lattice $L$ such that $L\otimes\ok{}$ is universal over $\ok{}$.
\end{thm}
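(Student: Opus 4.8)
The plan is to reduce, via Theorem~\ref{thm:equiv-existence-of-univ}, to exhibiting a single $\alpha\in\ok{+}$ that is not $F$-representable, and then to choose $\alpha$ so that the linear-system criterion of Lemma~\ref{lem:solve-pqr} has no admissible solution. First I would record the data. By Lemma~\ref{lem:extn-by-sqrt(d)}, $\ok{}=\of{}1+\of{}w_\Delta$ with $w_\Delta=\tfrac{t+\sqrt\Delta}{2}$, where $(t,\Delta)=(1,e)$ if $e\equiv1\Mod 4$ and $(t,\Delta)=(0,4e)$ if $e\equiv 2,3\Mod 4$; thus $K=F(\sqrt\Delta)$, $t=\tr_{K/F}(w_\Delta)\in\{0,1\}$, and $n':=\tfrac{\Delta-t^2}{4}$ is a \emph{positive rational integer} (namely $\tfrac{e-1}{4}$, resp.\ $e$). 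Since $\Delta,t,n'\in\Z$, the hypotheses of Lemma~\ref{lem:solve-pqr} hold, so for $u,v\in\of{}$ the element $\alpha=u+vw_\Delta\in\ok{+}$ is $F$-representable iff there are $p,q,r\in\of{}$ with $p,r,4pr-q^2\succeq0$, $p+n'r=u$, and $q+tr=v$. Throughout, write $n=[F:\q]$ and recall that $\mathrm{N}_{F/\q}(\cdot)^{1/n}$ is superadditive on totally non-negative elements of $F$ (the Minkowski-type inequality already used in the proof of Proposition~\ref{prop:quad_general_bound}), while $\mathrm{N}_{F/\q}(m)^{1/n}=m$ for a positive integer $m$.

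Case $e\equiv2,3\Mod 4$. Then $t=0$, $n'=e\ge2$, and I take $\alpha=n'+w_\Delta=e+\sqrt e$, which lies in $\ok{+}$ because $e>1$. If $\alpha$ were $F$-representable, Lemma~\ref{lem:solve-pqr} would give $p,q,r\in\of{}$ with $p,r,4pr-q^2\succeq0$, $p+er=e$, and $q=1$. If $r\neq0$ then $\mathrm{N}_{F/\q}(r)\ge1$, so
\[
e=\mathrm{N}_{F/\q}(p+er)^{1/n}\ \ge\ \mathrm{N}_{F/\q}(p)^{1/n}+e\,\mathrm{N}_{F/\q}(r)^{1/n}\ \ge\ \mathrm{N}_{F/\q}(p)^{1/n}+e,
\]
forcing $\mathrm{N}_{F/\q}(p)=0$, i.e.\ $p=0$, hence $r=1$; if $r=0$ then $p=e$. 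In both cases $pr=0$, so $4pr-q^2=-q^2\succeq0$ gives $q=0$, contradicting $q=1$. Hence $\alpha$ is not $F$-representable.

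Case $e\equiv1\Mod 4$ with $e\neq5$. As $e$ is square-free and $e\notin\{1,5,9\}$, in fact $e\ge13$, so $n'=\tfrac{e-1}{4}\ge3$. Here $n'+w_\Delta=w_\Delta^2$ is represented by $Q(x,y)=y^2$, so instead I take $\alpha=(n'-1)+w_\Delta$; its conjugates over $\q$ are $(n'-1)+\tfrac{1\pm\sqrt e}{2}$, and the smaller one equals $\tfrac{(\sqrt e-3)(\sqrt e+1)}{4}>0$ because $e>9$, so $\alpha\in\ok{+}$. If $\alpha$ were $F$-representable, Lemma~\ref{lem:solve-pqr} gives $p,q,r\in\of{}$ with $p,r,4pr-q^2\succeq0$, $p+n'r=n'-1$, and $q+r=1$. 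If $r\neq0$ then $\mathrm{N}_{F/\q}(r)\ge1$, so
\[
n'-1=\mathrm{N}_{F/\q}(p+n'r)^{1/n}\ \ge\ \mathrm{N}_{F/\q}(p)^{1/n}+n'\,\mathrm{N}_{F/\q}(r)^{1/n}\ \ge\ n',
\]
a contradiction; so $r=0$, whence $q=1$ and $4pr-q^2=-1\prec0$, again a contradiction. Thus $\alpha$ is not $F$-representable, and Theorem~\ref{thm:equiv-existence-of-univ} finishes the proof in both cases.

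These two computations are precisely the $m_1$- and $m_2$-steps from the proof of Proposition~\ref{prop:quad_general_bound}(4); the point is that here the hypothesis \eqref{eqn:condDelta} typically \emph{fails}, since the fixed integer $\Delta$ can be far smaller than $u^2$ for $u\in U_F$ once $[F:\q]$ is large, yet the argument survives because $\Delta$, $t$, $n'$ are rational, so the norm identity $\mathrm{N}_{F/\q}(n')^{1/n}=n'$ holds \emph{uniformly in $F$}. I expect the only delicate point to be checking total positivity of $\alpha=(n'-1)+w_\Delta$ in the case $e\equiv1\Mod 4$, which needs $e>9$; this is exactly where $e=5$ must be excluded, since for $e=5$ one has $n'=1$, so $(n'-1)+w_\Delta=w_\Delta$ is not totally positive while $n'+w_\Delta=w_\Delta^2$ is $F$-representable, leaving no obstruction of this type.
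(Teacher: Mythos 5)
Your proof is correct and follows essentially the same route as the paper: reduce via Theorem~\ref{thm:equiv-existence-of-univ} and Lemma~\ref{lem:solve-pqr} to showing a single element $m+w_\Delta$ is not $F$-representable, then use superadditivity of $\mathrm{N}_{F/\q}(\cdot)^{1/n}$ to force $pr=0$ and contradict $4pr-q^2\succeq 0$. The only difference is the choice of $m$ (you take $e$ resp.\ $\frac{e-5}{4}$, mirroring Proposition~\ref{prop:quad_general_bound}(4), while the paper takes the smaller $1+\lfloor\sqrt e\rfloor$ resp.\ $\lfloor\frac{1+\sqrt e}{2}\rfloor$); both choices work.
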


\begin{proof} First, let us consider the case when $e\equiv 2,3\Mod{4}$. Note that $\ok{} = \of{} + \of{} \sqrt{e}$ by Lemma \ref{lem:extn-by-sqrt(d)}. Let $n_e = 1 + \lfloor \sqrt{e} \rfloor$ and consider the element $\alpha = n_e + \sqrt{e}\in\ok{+}$. We may apply Lemma \ref{lem:solve-pqr} with $\omega_\Delta=\frac{0+\sqrt{\Delta}}{2}=\sqrt{e}$ to show that $\alpha$ is $F$-representable if and only if $p + er = n_e$ and $q=1$ for some $p,r \in \of{+}$ satisfying $4pr-q^2 \succeq 0$. Since $e \ge n_e$, we should have either $r=0$ or $p=0$, which is a contradiction as $4pr-q^2=-1\prec0$.

Now let us consider the case when $e\equiv 1\Mod{4}$ with $e>5$. Let $\omega_e = \frac{1+\sqrt{e}}{2}$ and $n_e =  \lfloor \omega_e \rfloor$. One may show that $\alpha=n_e + \omega_e\in\ok{+}$. Applying Lemma \ref{lem:solve-pqr} with $\omega_e$, we may verify that $\alpha$ is $F$-presentable if and only if $p+\frac{e-1}{4} r = n_e$ and $q+r=1$ for some $p,q,r \in \of{}$ satisfying $p,r,4pr-q^2 \succeq 0$. Noting that $e \ge 13$, we have $\frac{e-1}{4}  = \frac{\sqrt{e}+1}{2} \frac{\sqrt{e}-1}{2} > \lfloor \frac{\sqrt{e}+1}{2} \rfloor \frac{\sqrt{13}-1}{2}> n_e$. Hence we should have $r=0$, and hence $q=1$, which is a contradiction $4pr-q^2=-1\prec0$.
\end{proof}

\begin{rmk}
Note that the proof of Theorem \ref{prop:lifting-fail-sqrt(d)-extn} does not work when $e=5$ since $\alpha = 1+\frac{1+\sqrt{5}}{2}=\left(\frac{1+\sqrt{5}}{2}\right)^2$ is $F$-representable.
\end{rmk}

\subsection{Extension by $\sqrt{5}$}
Motivated by Theorem \ref{prop:lifting-fail-sqrt(d)-extn}, we are interested in investigating totally real number field $F$ with $\mathrm{gcd}(\mathrm{Disc}(F),5)=1$ which admits an $\of{}$-lattice that is universal over $F(\sqrt{5})$. 

Let $K=F(\sqrt{5})$ and let $\epsilon = \frac{1+\sqrt{5}}{2}$. By Lemma \ref{lem:extn-by-sqrt(d)}, we have $\ok{} = \of{} + \of{}\epsilon$. We first introduce a lemma describing conditions for an element $a+b\epsilon\in\ok{}$ with $a,b\in\of{}$ to be totally positive and $F$-representable.
\begin{lem}\label{lem:extn-by-sqrt5}
   Let $K=F(\sqrt{5})$ and let $\alpha=a+b\epsilon\in\ok{}$ with $a,b\in\of{}$. Then we have the following:
   \begin{enumerate}[label={{\rm(\arabic*)}}]
       \item $\alpha \in\ok{+}$ if and only if $a\in\of{+}$ and $(1-\epsilon)\rho(a)< \rho(b) < \epsilon \rho(a)$ for any embedding $\rho:F\ra \mathbb{R}$.
       \item $\alpha\in\ok{+}$ is $F$-representable if and only if there exist $p,q,r \in \of{}$ with $p, r, 4pr-q^2 \succeq 0$ satisfying
\[
a=p+r \quad  \text{and} \quad b=q+r.
\]
       If this is the case, then $0\preceq r \preceq a$ in $F$ and for any embedding $\rho:F\ra \mathbb{R}$ with $\beta = \rho\left(\frac{b}{a}\right)$, we have
\begin{equation}\label{eqn:rhor/a-bound}
\frac{\beta+2}{5} - \frac{2}{5} \sqrt{1+ \beta - \beta^2} \le \rho\left(\frac{r}{a}\right) \le \frac{\beta+2}{5} + \frac{2}{5} \sqrt{1+ \beta - \beta^2}.
\end{equation}

    \end{enumerate}
\end{lem}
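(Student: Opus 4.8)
The plan is to analyze everything through the two real embeddings of $K$ lying above a fixed real embedding $\rho$ of $F$. Since $K=F(\sqrt5)$, these two embeddings are determined by $\sqrt5\mapsto\pm\sqrt5$, equivalently by $\epsilon\mapsto\epsilon$ and $\epsilon\mapsto 1-\epsilon$ (using $\epsilon+\bar\epsilon=1$ with $\bar\epsilon=\frac{1-\sqrt5}{2}$). Thus the conjugates of $\alpha=a+b\epsilon$ above $\rho$ are $\rho(a)+\rho(b)\epsilon$ and $\rho(a)+\rho(b)(1-\epsilon)$, and $\alpha\in\ok{+}$ if and only if both are positive for every $\rho$.

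For part (1), I would first show that $\rho(a)>0$ is forced: since $\epsilon>0>1-\epsilon$, if $\rho(a)\le0$ then positivity of the first conjugate forces $\rho(b)>0$ while positivity of the second forces $\rho(b)<0$, a contradiction. Granting $\rho(a)>0$, I would rewrite $\rho(a)+\rho(b)\epsilon>0$ as $\rho(b)>-\rho(a)/\epsilon=(1-\epsilon)\rho(a)$ (using $1/\epsilon=\epsilon-1$) and $\rho(a)+\rho(b)(1-\epsilon)>0$ as $\rho(b)<\rho(a)/(\epsilon-1)=\epsilon\rho(a)$; conversely, these two inequalities make both conjugates positive. This gives exactly the stated criterion, and the interval $\big((1-\epsilon)\rho(a),\epsilon\rho(a)\big)$ is nonempty since $1-\epsilon<\epsilon$.

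For part (2), I would invoke Lemma \ref{lem:solve-pqr}. Its hypothesis is available here even though $F$ need not have class number $1$: by Lemma \ref{lem:extn-by-sqrt(d)} we have $\ok{}=\of{}1+\of{}\epsilon$ with $\epsilon=\frac{1+\sqrt5}{2}=\frac{t+\sqrt\Delta}{2}$ for $t=1$ and $\Delta=5\in\of{+}$, so $K=F(\sqrt\Delta)$ satisfies the required integral-basis condition. Using $\epsilon^2=\epsilon+1$, the representability equation $p+q\epsilon+r\epsilon^2=a+b\epsilon$ becomes $(p+r)+(q+r)\epsilon=a+b\epsilon$, i.e.\ $a=p+r$ and $b=q+r$; combined with $p,r,4pr-q^2\succeq0$ this is the asserted system. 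From $r\succeq0$ and $p=a-r\succeq0$ we immediately get $0\preceq r\preceq a$.

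Finally, for the refined bound \eqref{eqn:rhor/a-bound}, I would substitute $p=a-r$ and $q=b-r$ into $4pr-q^2\succeq0$, which after expansion and applying an embedding $\rho$ reads $5\rho(r)^2-(4\rho(a)+2\rho(b))\rho(r)+\rho(b)^2\le0$. Dividing by $\rho(a)^2>0$ and writing $\beta=\rho(b/a)$ turns this into $5\rho(r/a)^2-(4+2\beta)\rho(r/a)+\beta^2\le0$, a quadratic in $\rho(r/a)$ with discriminant $16(1+\beta-\beta^2)$; this discriminant is positive because part (1) forces $1-\epsilon<\beta<\epsilon$, and solving the quadratic yields precisely the two bounds in \eqref{eqn:rhor/a-bound}. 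None of these steps is a genuine obstacle; the only points needing care are the sign bookkeeping in (1) (where the reciprocal identities $1/\epsilon=\epsilon-1$ and $1/(\epsilon-1)=\epsilon$ enter) and the verification that Lemma \ref{lem:solve-pqr} is legitimately applicable in this generality, which holds because the integral basis $\ok{}=\of{}1+\of{}\epsilon$ comes from Lemma \ref{lem:extn-by-sqrt(d)} rather than from a class-number hypothesis.
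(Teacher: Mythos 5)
Your proof is correct and follows essentially the same route as the paper: part (1) by analyzing the two conjugates $\rho(a)+\rho(b)\epsilon$ and $\rho(a)+\rho(b)(1-\epsilon)$ above each embedding, and part (2) by specializing Lemma \ref{lem:solve-pqr} (and its bound \eqref{eqn:rbound}) to $t=1$, $\Delta=5$. You merely spell out details the paper leaves implicit, including the worthwhile observation that the integral basis $\ok{}=\of{}1+\of{}\epsilon$ comes from Lemma \ref{lem:extn-by-sqrt(d)} rather than a class-number hypothesis.
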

\begin{proof}
To show part (1), note that $\alpha\in\ok{+}$ if and only if $\rho(a)+\rho(b)\epsilon>0$ and $\rho(a)+\rho(b)(1-\epsilon)>0$ for any embedding $\rho:F\ra \mathbb{R}$. Thus the inequality $(1-\epsilon)\rho(a)< \rho(b) < \epsilon \rho(a)$ follows immediately. Noting that $\epsilon>0$ while $1-\epsilon<0$, we should have $\rho(a)>0$, since otherwise, we have either $\rho(a)+\rho(b)\epsilon\le0$ or $\rho(a)+\rho(b)(1-\epsilon)\le0$. Hence we have $a\in\of{+}$. 

Part (2) of the lemma follows immediately from Lemma \ref{lem:solve-pqr} and \eqref{eqn:rbound} with the fact that $\rho(a)>0$. 
\end{proof}

\begin{thm}\label{thm:extn-by-sqrt5}
Let $F$ be a real quadratic field such that $\gcd(D_F,5)=1$; then $K=F(\sqrt{5})$ is a real quadratic extension of $F$. Suppose that there is an $\of{}$-lattice $L$ such that $L\otimes \ok{}$ is universal over $\ok{}$. Then $D_F\le 4076$, and hence there are only finite many such real quadratic fields $F$. 
\end{thm}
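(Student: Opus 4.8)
\textit{Plan.} By Theorem~\ref{thm:equiv-existence-of-univ}, the hypothesis forces every $\alpha\in\ok{+}$ to be $F$-representable, so it suffices to show that when $D_F$ is large there is a totally positive element of $\ok{}$ that is \emph{not} $F$-representable; the finiteness assertion then follows at once, since there are only finitely many fundamental discriminants below any bound. Write $\alpha=a+b\epsilon$ with $a,b\in\of{}$ and $\epsilon=\tfrac{1+\sqrt5}{2}$. By Lemma~\ref{lem:extn-by-sqrt5}(1), $\alpha\succ0$ exactly when $a\succ0$ and $(1-\epsilon)\rho_i(a)<\rho_i(b)<\epsilon\rho_i(a)$ for $i=1,2$; by Lemma~\ref{lem:extn-by-sqrt5}(2), after rewriting $4pr-q^2\succeq0$ (with $p=a-r$, $q=b-r$) as a quadratic inequality in $r$, $\alpha$ is $F$-representable precisely when the point $(\rho_1(r),\rho_2(r))$ lies, for some $r\in\of{}$, in the rectangle
\[
\mathcal R_\alpha=\prod_{i=1}^2\bigl[c_i-\delta_i,\ c_i+\delta_i\bigr],\qquad c_i=\tfrac{2\rho_i(a)+\rho_i(b)}{5},\qquad \delta_i=\tfrac25\sqrt{\rho_i\bigl(N_{K/F}(\alpha)\bigr)},
\]
where $N_{K/F}(\alpha)=a^2+ab-b^2$ (which is totally positive precisely under the positivity condition above), and the auxiliary inequalities $0\preceq r\preceq a$ are automatic once the conjugates of $r$ lie in $\mathcal R_\alpha$. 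So the goal is to \emph{choose $\alpha\in\ok{+}$ so that $\mathcal R_\alpha$ contains no point of the lattice $\of{}\hookrightarrow\R^2$.}

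For the geometry, pass to the coordinates $(s,t)=(\rho_1+\rho_2,\rho_1-\rho_2)$, in which $\of{}$ becomes a rectangular lattice all of whose points satisfy $t\in\sqrt{D_F}\,\Z$ (with a parity/congruence condition on the $s$-coordinate that differs slightly according as $D_F\equiv1$ or $D_F\equiv0\pmod4$, reflecting the two shapes of $\of{}=\Z+\Z\tau$). Writing $a=m_a+n_a\tau$ and $b=m_b+n_b\tau$, the $t$-extent of $\mathcal R_\alpha$ is the interval of half-width $\delta_1+\delta_2$ centred at $c_1-c_2=\tfrac{(2n_a+n_b)\sqrt{D_F}}{5}$. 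Hence if one arranges $2n_a+n_b\not\equiv0\pmod5$, so that this centre lies at distance $\ge\tfrac{\sqrt{D_F}}{5}$ (indeed $\ge\tfrac{2\sqrt{D_F}}{5}$ when $2n_a+n_b\equiv\pm2$) from every line $t\in\sqrt{D_F}\,\Z$, and if simultaneously $\delta_1+\delta_2$ is smaller than that distance, then the $t$-extent of $\mathcal R_\alpha$ avoids $\sqrt{D_F}\,\Z$, so $\mathcal R_\alpha$ contains no point of $\of{}$ and $\alpha$ is not $F$-representable.

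It remains to produce such an $\alpha$ with $\delta_1+\delta_2$ small. The plan is to use the rounding Lemma~\ref{lem:quadRounding} to select $a\in\of{+}$ with its two conjugates in prescribed (not necessarily symmetric) windows, and then to select $b\in\of{}$ with $\rho_i(b)$ lying just below $\epsilon\rho_i(a)$: this keeps $\alpha$ totally positive while making $\rho_i\bigl(N_{K/F}(\alpha)\bigr)=(\epsilon\rho_i(a)-\rho_i(b))(\rho_i(b)-(1-\epsilon)\rho_i(a))$ as small as the rounding error allows, so that $\delta_i$ is of order $\sqrt{l_F\,\rho_i(a)}$ with $l_F=\tfrac12\sqrt{D_F}+1$; the slack remaining in these choices is also what lets us impose $2n_a+n_b\not\equiv0\pmod5$. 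Requiring $\delta_1+\delta_2<\tfrac{2\sqrt{D_F}}{5}$ while keeping $\rho_i(a)$ large enough that a totally positive $b$ of this type exists leads to an explicit inequality in $D_F$, and tracking the constants (together with the two parity cases) shows that it is satisfiable once $D_F>4076$. I expect the main obstacle to be exactly this last step: the two demands on the sizes of $\rho_1(a)$ and $\rho_2(a)$ pull in opposite directions and only barely fit, so the two conjugates must be chosen carefully and asymmetrically, the congruence condition modulo $5$ honoured throughout, and each parity case treated separately; it is the tuning of these competing estimates that produces the constant $4076$, a cruder execution yielding a substantially larger bound.
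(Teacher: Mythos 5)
Your reduction is correct and matches the paper's: by Theorem \ref{thm:equiv-existence-of-univ} it suffices to exhibit a totally positive, non-$F$-representable element of $\ok{}$, and by Lemma \ref{lem:extn-by-sqrt5} the $F$-representability of $\alpha=a+b\epsilon$ amounts to the rectangle $\mathcal R_\alpha$ meeting the image of $\of{}$ (your formula for $\mathcal R_\alpha$ is right, and the containment $\mathcal R_\alpha\subseteq[0,\rho_1(a)]\times[0,\rho_2(a)]$ does make the side conditions $0\preceq r\preceq a$ automatic). Your detection criterion via the coordinate $t=\rho_1-\rho_2\in\sqrt{D_F}\,\Z$, with the rectangle's $t$-extent centred at $(2n_a+n_b)\sqrt{D_F}/5$, is exactly the mechanism of the paper's final contradiction (there with $n_a=1$, $n_b=2$, so $2n_a+n_b\equiv 4\pmod 5$ and the clearance is only $\sqrt{D_F}/5$, not the $2\sqrt{D_F}/5$ you aim for).

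The genuine gap is in the construction of $a$ and $b$, which you defer. As described --- choose $a$ by Lemma \ref{lem:quadRounding} and then choose $b$ by the same two-dimensional rounding with $\rho_i(b)$ ``just below'' $\epsilon\rho_i(a)$ --- the argument does not close, for a quantitative reason: the rounding error for $b$ is then $l_F\approx\tfrac12\sqrt{D_F}$ in each coordinate, so the best guaranteed bound is $\rho_i(N_{K/F}(\alpha))\le l_F\cdot\sqrt5\,\rho_i(a)$, hence $\delta_1+\delta_2\le\tfrac{2\cdot 5^{1/4}}{5}\sqrt{l_F}\bigl(\sqrt{\rho_1(a)}+\sqrt{\rho_2(a)}\bigr)$; demanding this be less than $\tfrac{2\sqrt{D_F}}{5}$ forces $\sqrt{\rho_1(a)}+\sqrt{\rho_2(a)}<\sqrt2\,5^{-1/4}D_F^{1/4}(1+o(1))$, while the mere existence of a totally positive $b$ within $l_F$ of $\epsilon\rho_i(a)$ forces $\sqrt5\,\rho_i(a)>l_F$ for each $i$, i.e.\ $\sqrt{\rho_1(a)}+\sqrt{\rho_2(a)}>\sqrt2\,5^{-1/4}D_F^{1/4}(1+o(1))$ --- the same constant, so the two requirements are incompatible for every $D_F$. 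The missing idea, which is the heart of the paper's proof, is to fix $n_b$ and choose $b$ on the one-dimensional coset $n_b\tau+\Z$, whose consecutive points differ by $1$ in each embedding, so that the gap $\epsilon\rho_1(a)-\rho_1(b)$ is $O(1)$ rather than $O(\sqrt{D_F})$; the single integer degree of freedom in $a\in\tau+\Z$ is then spent positioning the corner $(\epsilon\rho_1(a),(1-\epsilon)\rho_2(a))$ of the positivity cone just below the line $x-y=n_b\sqrt{D_F}$, which simultaneously makes $\rho_2(b)-(1-\epsilon)\rho_2(a)=O(1)$. With one factor of each conjugate of the norm thus controlled, $\delta_1+\delta_2=O(D_F^{1/4})$ with explicit constants, and the threshold $4077$ drops out. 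Your closing claim that the optimization yields the constant $4076$ is asserted rather than derived, and your stated target clearance $2\sqrt{D_F}/5$ (via $2n_a+n_b\equiv\pm2$) is not the one the paper's construction actually attains, so the numerics would in any case come out differently.
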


\begin{proof}

Recall that $\of{}=\z 1 + \z \tau$ where $\tau\in\of{}$ is defined as in \eqref{eqn:def-O_F} and let $\rho_1=\mathrm{id}_F$, $\rho_2$ be two embeddings of $F$ into $\mathbb{R}$. Assuming that $D=D_F\ge4077$, we will construct $a\in\of{+}$ and $b\in \of{}$ such that $a+b\epsilon \in \ok{+}$ which is not $F$-representable. The existence of such an element $a+b\epsilon$ immediately implies the theorem. 

Assume that $D=D_F\ge4077$. We first take $a\in\of{}$ to be the unique element in $\tau+\z 1 \subseteq \of{}$ satisfying
\begin{equation}\label{eqn:fa-bound}
2\sqrt{D}+1 < f(a):=\epsilon \rho_1(a) + (\epsilon -1 )\rho_2(a) \le 2\sqrt{D} + 1 + \sqrt{5}.
\end{equation}
Such an element exists uniquely since for $a'\in\tau+\z1$, $f(a'+1)=f(a')+2\epsilon-1=f(a')+\sqrt{5}$. Note that $\rho_1(a)=\rho_2(a)+\sqrt{D}$, and hence $f(a)=\epsilon\sqrt{D}+\sqrt{5}\rho_2(a)>2\sqrt{D}+1$ implies that $\rho_2(a)>0$. Moreover, $f(a)=\sqrt{5}\rho_1(a)+\frac{1-\sqrt{5}}{2}\sqrt{D}$ with \eqref{eqn:fa-bound} yields the bound of
\begin{equation}\label{eqn:rho1a-bound}
    \frac{1}{\sqrt{5}}\left(\frac{3+\sqrt{5}}{2}\sqrt{D}+1\right) < \rho_1(a)\le \frac{1}{\sqrt{5}}\left(\frac{3+\sqrt{5}}{2}\sqrt{D}+1+\sqrt{5}\right).
\end{equation}

Now we construct $b\in 2\tau + \z1\subset\of{}$ such that $\rho_1(b)<\epsilon\rho_1(a)$ and $(1-\epsilon)\rho_2(a)<\rho_2(b)$. Consider an injective map $\widetilde{\rho}:\of{}\ra \mathbb{R}^2$ defined by $\widetilde{\rho}(\alpha)=(\rho_1(\alpha),\rho_2(\alpha))$. Then all the points $\widetilde{\rho}(2\tau+\z1)$ lies on the line $L=\{(x,y)\in\mathbb{R}^2 : x - y = 2 \sqrt{D} \}$, placed in every $\sqrt{2}$ distance. Note that the point $P=(\epsilon \rho_1(a), (1-\epsilon) \rho_2(a))$ is on the line $L'=\{(x,y)\in\mathbb{R}^2 : x - y = f(a)\}$. Since $f(a)>2\sqrt{D}+1$ from the construction, $P$ is located below $L$ with distance $>1/\sqrt{2}$. Hence the segment formed by the line $L$ and the intersection of region $\{(x,y)\in\mathbb{R}^2 : x<\epsilon \rho_1(a), y > (1-\epsilon)\rho_2(a) \}$ is of length $>\sqrt{2}$. Thus this segment contains a point $P_0$ in $\widetilde{\rho}(2\tau + \z1)$, whose inverse $\widetilde{\rho}^{-1}(P_0)$ is an element $b$ we were looking for.

Next step is to show that $a+b\epsilon\in\ok{+}$. By Lemma \ref{lem:extn-by-sqrt5} (1) and since $\rho_1(b)<\epsilon\rho_1(a)$ and $(1-\epsilon)\rho_2(a)<\rho_2(b)$ from the construction, it suffices to show that $(1-\epsilon)\rho_1(a)<\rho_1(b)$ and $\rho_2(b)<\epsilon\rho_2(a)$. Note that
\begin{equation}\label{eqn:rho1b-bound-lower}
    \rho_1(b) = 2\sqrt{D} + \rho_2(b) > 2 \sqrt{D} + (1- \epsilon) \rho_2(a)\ge  f(a) - (1+\sqrt{5})+  (1-\epsilon) \rho_2(a) = \epsilon \rho_1(a) - (1+\sqrt{5}),
\end{equation}
where we used \eqref{eqn:fa-bound} in the last two steps. Using \eqref{eqn:rho1a-bound}, we may verify that $\epsilon\rho_1(a)-(1+\sqrt{5}) > (1-\epsilon)\rho_1(a)$. Similarly, one may use \eqref{eqn:fa-bound} and \eqref{eqn:rho1a-bound} to obtain $\rho_2(b)<\epsilon\rho_2(a)$.

We now show that $a+b\epsilon$ is not $F$-representable. Assume to the contrary that $a+b\epsilon$ is $F$-representable. Then by Lemma \ref{lem:extn-by-sqrt5} (2), there is $r\in\of{}$ with $0\preceq r \preceq a$ and satisfying \eqref{eqn:rhor/a-bound}. If $\rho_2(a)<\rho_1(r)<\rho_1(a)-\rho_2(a)$, then since $0<\rho_2(r)<\rho_2(a)$ we have $0<\rho_1(r)-\rho_2(r)<\rho_1(a)-\rho_2(a)=\sqrt{D}$. However, this is impossible since $\rho_1(\alpha)-\rho_2(\alpha)\in \z\sqrt{D}$ for any $\alpha\in\of{}$. Thus we have $\rho_1(r)\le\rho_2(a)$ or $\rho_1(r)\ge \rho_1(a)-\rho_2(a)$. Combining this with $\rho_2(a)=\rho_1(a)-\sqrt{D}$ and \eqref{eqn:rho1a-bound}, we have
\[
	0 \le \rho_1\left(\frac{r}{a}\right) \le 1 - \frac{ 2 \sqrt{5}}{3 + \sqrt{5} + 2 D^{-1/2}} \quad \text{or} \quad \frac{ 2 \sqrt{5}}{3 + \sqrt{5} + 2 (1+\sqrt{5}) D^{-1/2}} \le \rho_1\left(\frac{r}{a}\right) \le 1.
\]
Since we are assuming $D\ge 4077$, we have
\begin{equation}\label{eqn:rho1r/a-bound1}
    0 \le \rho_1\left(\frac{r}{a}\right) \le 0.151 \quad \text{or} \quad 0.837881\le \rho_1\left(\frac{r}{a}\right) \le 1.
\end{equation}

On the other hand, putting $\beta=\rho_1\left(\frac{b}{a}\right)$, recalling the bound $\epsilon\rho_1(a)-(1+\sqrt{5})<\rho_1(b)<\epsilon\rho_1(a)$ from \eqref{eqn:rho1b-bound-lower} and combining those with \eqref{eqn:rho1a-bound} and $D\ge4077$, we have the bound
\[
    \epsilon-\frac{10+2\sqrt{5}}{(3+\sqrt{5})\sqrt{4077}+2}\le\epsilon-\frac{10+2\sqrt{5}}{(3+\sqrt{5})\sqrt{D}+2}<\epsilon-\frac{1+\sqrt{5}}{\rho_1(a)}<\beta< \epsilon.
\]
In this range of $\beta$, the left-hand-side of \eqref{eqn:rhor/a-bound} is increasing while the right-hand-side of \eqref{eqn:rhor/a-bound} is decreasing, yielding the bound
\begin{equation}\label{eqn:rho1r/a-bound2}
    0.592\le \rho_1\left(\frac{r}{a}\right) \le 0.837877
\end{equation}
Since two bounds \eqref{eqn:rho1r/a-bound1} and \eqref{eqn:rho1r/a-bound2} are disjoint, we get a contradiction, which means that $a+b\epsilon$ is not $F$-representable.
\end{proof}

\end{document}